\documentclass[reqno]{amsart}

\usepackage{amsmath,amssymb,amsthm}
\usepackage{braket,mathtools,bbm}
\usepackage{amscd}

\usepackage[dvipdfmx]{graphicx}
\usepackage[subrefformat=parens]{subcaption}

\usepackage[dvipsnames]{xcolor}

\usepackage{tikz-cd} 


\newtheorem{lma}{Lemma}[section]
\newtheorem{prop}[lma]{Proposition}

\newtheorem{mainthm}{Theorem}

\newtheorem{cor}[lma]{Corollary}

\theoremstyle{definition}

\numberwithin{equation}{section}

\newcommand{\bbN}{\mathbb{N}}

\newcommand{\bbR}{\mathbb{R}}
\newcommand{\bbC}{\mathbb{C}}



\newcommand{\fgfuchs}{\Gamma}
\newcommand{\nsubg}{N}
\newcommand{\disk}{\mathbb{D}}

\newcommand{\hausdim}{\dim_{H}}


\newcommand{\escset}{\mathcal{E}}
\newcommand{\unifescset}{\mathcal{UE}}
\newcommand{\accumset}{\escset^*}
\newcommand{\limitset}{\Lambda}
\newcommand{\leftshift}{\sigma}
\newcommand{\shiftsp}{\Sigma}
\newcommand{\alphabet}{A}
\newcommand{\incmat}{\mathbb{M}}




\newcommand{\geoflow}{\gamma}




\newcommand{\fundpoly}{D}

\newcommand{\diam}{\mathrm{diam}}

\newcommand{\hypbgrp}{\Gamma}

\newcommand{\identity}{1}

\newcommand{\poincareexp}{\delta}

\newcommand{\supp}{\mathrm{supp}}
\newcommand{\hyperbsp}{\mathbb{H}}
\newcommand{\Isom}{\mathrm{Isom}}


\title[Escaping trajectories on free groups]
{A multifractal analysis for escaping trajectories on free groups}
\author{Ziyu Liu}

\subjclass[2020]{37B10, 37D20, 37D40, 37F32,
37C45.}
\thanks{{\it Keywords}: group extension of symbolic dynamics, free groups, Gromov boundary, multifractal analysis, random walks on groups, geodesic flow on geometrically infinite hyperbolic surfaces}

\begin{document}


\address{School of Arts and Sciences, Shanghai Dianji University, Shanghai, China} 
\email{05564@sdju.edu.cn}

\begin{abstract}
    We consider a free group extension of a subshift of finite type $\leftshift:\shiftsp\rightarrow\shiftsp$, and consider three sets of points in $\shiftsp$ to which the corresponding trajectories on the free group escape to a given point in the Gromov boundary of the free group in three different senses. Under very mild conditions, we provide a common positive lower bound for the $u$-dimensions of these three sets. We also show that the lower bound is equal to the $u$-dimensions of these three sets when the extension and $u$ have certain symmetries. Moreover, we apply our results to geodesic flows on free group covers of Schottky surfaces, and show that there exists a dimension gap between the three sets we considered and the entire escaping set.
\end{abstract}

\maketitle

\section{Introduction and statement of main results}
\subsection{Main results and their motivation from random walks}
    Random walks on non-elementary hyperbolic groups are known to exhibit a transient nature. In this paper, we will confine our discussions to non-cyclic finitely generated free groups $\fgfuchs$. Consider a random walk on $\fgfuchs$ whose generating probability measure is supported by a finite generating set of $\fgfuchs$. For the case where the random walk is symmetric, Kesten's theorem \cite{Kesten} reveals that the spectral radius of this random walk is bound to be strictly less than one. This in turn gives the transience and also the positivity of the speed of the random walk; we refer to \cite[Chapter~II]{woessbook} for details. When the random walk is non-symmetric, recent results obtained by Dougall and Sharp show that the spectral radius is still strictly less than one \cite[Corollary 1.2]{DougallSharp}. Hence, on the one hand, we know that almost every sample path escapes towards a certain point in the Gromov boundary $\partial\fgfuchs$ of $\fgfuchs$. On the other hand, it is shown in \cite[Theorem 1.1]{MT} that the hitting measure of the random walk on $\partial\fgfuchs$ is non-atomic. In other words, for every $x\in\partial\fgfuchs$, the sample paths escaping towards $x$ are negligible in a measure-theoretic sense. Therefore, a natural question arises; what is the Hausdorff dimension of the set of sample paths escaping towards some $x\in\partial\fgfuchs$? This paper aims to provide some answers to this question.

    To make our setup more precise, we work with free group extensions of a subshift of finite type (SFT) in this paper. Let $\leftshift:\shiftsp\rightarrow\shiftsp$ be a topologically transitive SFT given by some alphabet $\alphabet$ and some incidence matrix $\incmat:\alphabet\times\alphabet\rightarrow\set{0,1}$, and $\fgfuchs$ be a non-cyclic, finitely generated free group as before. Let $\shiftsp^*$ denote the set of all $\incmat$-admissible words over $\alphabet$, and $\chi:\shiftsp^*\rightarrow\fgfuchs$ be a surjection satisfying
    \begin{equation*}
        \chi(\omega\omega') =\chi(\omega)\chi(\omega'),\,\forall\omega,\omega'\in\shiftsp^*\text{ with }\omega\omega'\in\shiftsp^*.
    \end{equation*}
    We will call such $\chi$ a (homomorphic) projection. An additional standing assumption on $\chi$ we will make throughout this paper is the following; the preimage $\chi^{-1}(\identity_\fgfuchs)$ of the identity $\identity_\fgfuchs$ of $\fgfuchs$ is transitive, by which we mean that for all $a,a'\in\alphabet$, there is some $\rho\in\chi^{-1}(\identity_\fgfuchs)$ such that $a\rho a'\in\shiftsp^*$. From a viewpoint of the theory of dynamical systems, the projection $\chi$ defines a $\fgfuchs$-extension of $\leftshift:\shiftsp\rightarrow\shiftsp$, which is $T:\shiftsp\times\fgfuchs\rightarrow\shiftsp\times\fgfuchs$ with $T(\xi,g):=(\leftshift(\xi),g\cdot\chi(\xi_0))$ for all $\xi=\xi_0\xi_1\cdots\in\shiftsp$ and all $g\in\fgfuchs$.
    
    Suppose that the $\fgfuchs$ is freely generated by $\set{e_1,\cdots,e_n}\subseteq\fgfuchs$, meaning that $\fgfuchs$ is isomorphic to the free group $\langle e_1,\cdots,e_n|\varnothing\rangle$. We denote by $\partial\hypbgrp$ the Gromov boundary of $\hypbgrp$, whose elements can be identified with reduced infinite sequences over $\fgfuchs_0:=\set{e_1,e_1^{-1},\cdots,e_n,e_n^{-1}}$; see Subsection \ref{subsec:freegroups} for details. Given $x\in\partial\fgfuchs$, we define
    \begin{align*}
        \accumset_x&:=\Set{\xi\in\shiftsp|\limsup_{m\rightarrow+\infty}|\chi(\xi_0\cdots\xi_{m-1})\wedge x|=+\infty};\\
        \escset_x&:=\Set{\xi\in\shiftsp|\lim_{m\rightarrow+\infty}|\chi(\xi_0\cdots\xi_{m-1})\wedge x|=+\infty};\\
        \unifescset_x&:=\Set{\xi\in\escset_x|\sup_{m\geq 1}\left\{|\chi(\xi_0\cdots\xi_{m-1})|-|\chi(\xi_0\cdots\xi_{m-1})\wedge x|\right\}<+\infty},
    \end{align*}
    where $g\wedge x$ is the longest common prefix of $g\in\fgfuchs$ and $x\in\partial\fgfuchs$. Clearly, we have $\unifescset_x\subseteq \escset_x\subseteq \accumset_x$. In words, $\accumset_x$ consists of the sequences $\xi\in\shiftsp$ for which $\set{\chi(\xi_0\cdots\xi_{m-1})|m\geq 1}$ has $x$ as an accumulation point; $\escset_x$ consists of the sequences $\xi\in\shiftsp$ for which the trajectory $\chi(\xi_0\cdots\xi_{m-1})$ converges towards $x$ as $m\rightarrow+\infty$; $\unifescset_x$ consists of the sequences $\xi\in\shiftsp$ for which the trajectory $\chi(\xi_0\cdots\xi_{m-1})$ converges to $x$ as $m\rightarrow+\infty$ and lies within a bounded distance from the geodesic joining the identity $\identity_\fgfuchs\in\fgfuchs$ and $x$. We will estimate the Hausdorff dimension of these sets. For this purpose, let $u:\shiftsp\rightarrow(0,+\infty)$ be a H\"older continuous function, and we endow $\shiftsp$ with the metric $d_u$; see Subsection \ref{subsec:pre:symbdyn} for details. It can be checked \cite[Proposition 3.3]{liuthesis} that the Hausdorff dimension for the metric $d_u$ coincides with the (Barreira-Schmeling) $u$-dimension in \cite{BarSch00}. Hence, given any $E\subseteq\shiftsp$, we will only estimate the $u$-dimension $\dim_u(E)$ of $E$.
    
    Our first main theorem, namely Theorem \ref{mainthm:dimspec:general} below, gives a lower bound for the $u$-dimensions of $\accumset_x$, $\escset_x$ and $\unifescset_x$ in terms of a critical exponent $\poincareexp(\identity_\fgfuchs)$ which we shall call the Poincar\'e exponent restricted to $\chi^{-1}(\identity_\fgfuchs)$.
    
    \begin{mainthm} \label{mainthm:dimspec:general}
        Let $\leftshift:\shiftsp\rightarrow\shiftsp$ be a topologically transitive SFT, and $\fgfuchs$ be a non-cyclic, finitely generated free group. Suppose that $\chi:\shiftsp^*\rightarrow\fgfuchs$ is a projection with $\chi^{-1}(\identity_\fgfuchs)$ being transitive. Let $u:\shiftsp\rightarrow(0,+\infty)$ be a H\"older continuous function. Then, for all $x\in\partial\fgfuchs$, we have
        \begin{equation} \label{eqn:dimspec:ineq}
            \dim_u(\accumset_x)\geq \dim_u(\escset_x) \geq\dim_u(\unifescset_x)\geq\poincareexp(\identity_\fgfuchs)>0,
        \end{equation}
        where $\poincareexp(\identity_\fgfuchs):=\inf\set{p\geq 0|\sum_{\omega\in\chi^{-1}(\identity_\fgfuchs)}\exp(-pS_\omega u)<+\infty}$.
    \end{mainthm}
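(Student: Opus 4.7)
The plan is to reduce to establishing $\dim_u(\unifescset_x) \geq \poincareexp(\identity_\fgfuchs) > 0$; the remaining two inequalities in \eqref{eqn:dimspec:ineq} follow at once from $\unifescset_x \subseteq \escset_x \subseteq \accumset_x$ and monotonicity of $\dim_u$. To establish positivity of $\poincareexp(\identity_\fgfuchs)$, surjectivity of $\chi$ together with the non-cyclicity of $\fgfuchs$ furnishes words whose $\chi$-images span two independent generators; transitivity of $\chi^{-1}(\identity_\fgfuchs)$ then splices them into two distinct loops $\rho_0,\rho_1 \in \chi^{-1}(\identity_\fgfuchs)$ that are freely concatenable at a common base letter. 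Their $2^n$ free concatenations already give $2^n$ admissible words in $\chi^{-1}(\identity_\fgfuchs)$ of $Su$-norm $O(n)$, forcing the Poincaré series to diverge for all sufficiently small $p>0$.

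For the main bound I would construct, for every $p < \poincareexp(\identity_\fgfuchs)$, a Cantor subset of $\unifescset_x$ of $u$-dimension at least $p$. Write $x = x_1 x_2 \cdots$ in reduced form over $\fgfuchs_0$, and, using surjectivity of $\chi$, pick for each $e \in \fgfuchs_0$ a bounded-length link word $\mu_e \in \shiftsp^*$ with $\chi(\mu_e) = e$. Only finitely many $\mu_e$ are involved, so their $u$-sums share a uniform bound $M$. By divergence of the Poincaré series at $p$, together with pigeonholing on the admissible initial and terminal letters of $\omega$ and on the integer parts of $S_\omega u$, there exist a sequence $N_j \to \infty$ and finite families $\mathcal{W}_j \subseteq \chi^{-1}(\identity_\fgfuchs)$ with a common initial letter $a_*$, a common terminal letter $a^*$ satisfying $a^*a_* \in \shiftsp^*$, values $S_\omega u \in [N_j, N_j+1)$ on $\mathcal{W}_j$, and cardinality $\#\mathcal{W}_j \geq e^{p N_j}$. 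After absorbing short bridge words (provided by transitivity) into the $\mu_e$, one may assume every product of the form $\omega^{(1)}\mu_{x_1}\omega^{(2)}\mu_{x_2}\cdots$ with $\omega^{(k)} \in \mathcal{W}_j$ is admissible. The candidate set is
\begin{equation*}
    C_j := \left\{\omega^{(1)} \mu_{x_1} \omega^{(2)} \mu_{x_2} \omega^{(3)} \mu_{x_3} \cdots \,:\, \omega^{(k)} \in \mathcal{W}_j\right\}.
\end{equation*}

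At block boundaries $\chi$ of the prefix lands exactly on the geodesic $\{x_1 \cdots x_k : k \geq 0\}$ to $x$, so $|\chi(\xi_0\cdots\xi_{m-1}) \wedge x| \to \infty$ and $\xi \in \escset_x$. The Bernoulli probability $(\#\mathcal{W}_j)^{-1}$ on each $\omega \in \mathcal{W}_j$ gives a product measure on $\mathcal{W}_j^{\bbN}$; transported to $C_j$, each depth-$m$ cylinder carries mass $(\#\mathcal{W}_j)^{-m}$ and $d_u$-diameter $\asymp \exp(-m(N_j+M))$. The mass distribution principle (equivalently, the Barreira--Schmeling characterisation of the $u$-dimension) then yields
\begin{equation*}
    \dim_u(C_j) \;\geq\; \frac{\log \#\mathcal{W}_j}{N_j + M} \;\geq\; \frac{p N_j}{N_j + M} \,\xrightarrow{j \to \infty}\, p,
\end{equation*}
and letting $p \uparrow \poincareexp(\identity_\fgfuchs)$ gives the desired bound.

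The hard part will be verifying that members of $C_j$ actually lie in the uniform escape set $\unifescset_x$, not merely in $\escset_x$. A generic $\omega \in \chi^{-1}(\identity_\fgfuchs)$ is a closed walk in the Cayley tree of $\fgfuchs$ whose intermediate vertices may reach arbitrarily large depth, so splicing it into the construction can cause $|\chi(\xi_0\cdots\xi_{m-1})| - |\chi(\xi_0\cdots\xi_{m-1}) \wedge x|$ to grow without bound. The remedy is to further restrict $\mathcal{W}_j$ to words assembled from short base-$a_*$ loops of uniformly bounded depth (transitivity of $\chi^{-1}(\identity_\fgfuchs)$ supplies a plentiful stock of such short loops), and then to verify that the Poincaré exponent of this bounded-depth subfamily still equals $\poincareexp(\identity_\fgfuchs)$ --- a free-group analogue of the classical shadowing statement that bounded-excursion returns capture the full critical exponent.
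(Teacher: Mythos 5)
Your overall strategy coincides with the paper's: reduce to the lower bound for $\dim_u(\unifescset_x)$, build for each $p$ below the critical exponent a Cantor set of concatenated blocks whose $\chi$-images at block boundaries run along the geodesic ray to $x$, and apply the mass distribution principle. (The paper's blocks lie directly in $\chi^{-1}(x_k)$ with a Gibbs-type normalization, so one measure gives $\dim_u\geq p$ at once, while you use identity-loop blocks from an energy-window pigeonhole plus link words $\mu_{x_k}$ and let $j\to\infty$; that difference is cosmetic.) The genuine problem is your final paragraph. You declare that membership of $C_j$ in $\unifescset_x$ is the hard part and propose to resolve it by restricting to loops of uniformly bounded excursion depth and then proving that this bounded-depth subfamily still has full restricted Poincar\'e exponent; that ``shadowing'' claim is left entirely unproven and is not a triviality, so as written your argument is incomplete at exactly the step that distinguishes $\unifescset_x$ from $\escset_x$. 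But the step is also unnecessary: for each fixed $j$ the block alphabet $\mathcal{W}_j\cup\{\mu_e\}$ is finite (since $u$ is bounded below, $S_\omega u<N_j+1$ forces $|\omega|\leq(N_j+1)/\min u$), so the depth of the $\chi$-excursion inside any single block is bounded by some constant $D_j$; hence for every $\xi\in C_j$ one gets $|\chi(\xi_0\cdots\xi_{m-1})\wedge x|\geq k-D_j$ after $k$ blocks and $|\chi(\xi_0\cdots\xi_{m-1})|-|\chi(\xi_0\cdots\xi_{m-1})\wedge x|\leq 2D_j+O(1)$ for all $m$. Since $\unifescset_x$ only requires a finite sup for each individual point (no uniformity in $j$ is needed, as you estimate $\dim_u(C_j)$ separately), this gives $C_j\subseteq\unifescset_x$ outright; this finiteness observation is precisely how the paper concludes $K\subseteq\unifescset_x$.

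Two smaller gaps remain. First, your cylinder-mass bound $(\#\mathcal{W}_j)^{-m}$ presupposes that distinct blocks in the same position generate disjoint cylinders; elements of $\mathcal{W}_j$ can be prefixes of one another (their lengths range over an interval of size comparable to $N_j$), so you should prune $\mathcal{W}_j$ to a prefix-antichain, which costs only a factor polynomial in $N_j$ and is harmless for the exponent --- the paper devotes Lemma \ref{lma:disjcylinders} to the analogous disjointness issue. Second, in your positivity argument the $2^n$ concatenations of $\rho_0,\rho_1$ must be pairwise distinct words, which can fail unless $\{\rho_0,\rho_1\}$ forms a prefix code or the two loops have equal length (e.g.\ collisions occur when one loop is a power-type extension of the other); this is fixable but needs an argument, and the paper sidesteps it with a different device, injecting every admissible word into $\chi^{-1}(\identity_\fgfuchs)$ letter by letter via $a\mapsto a\rho(a)a$ and then invoking the exponential growth of $\fgfuchs$ pulled back through the surjection $\chi$.
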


    Our next main theorem, namely Theorem \ref{mainthm:dimspec:symmetric} below, states that all the inequalities in \eqref{eqn:dimspec:ineq} are in fact equalities, assuming the symmetries of the $\fgfuchs$-extension and $u$ in a sense described in \cite{Stad13}. The precise definitions of these symmetries will be given at the end of Subsection \ref{subsec:proj}.
    \begin{mainthm} \label{mainthm:dimspec:symmetric}
        Assume all the conditions in Theorem \ref{mainthm:dimspec:general}. Let $\cdot^\dagger:\shiftsp^*\rightarrow\shiftsp^*$ be an involution, and assume the symmetry of the $\fgfuchs$-extension given by the projection $\chi$ with respect to $\cdot^\dagger$ and also the symmetry of $u$ with respect to $\cdot^\dagger$. Then, we have
        \begin{equation} \label{eqn:dimspec:eq}
            \dim_u(\accumset_x)= \dim_u(\escset_x) =\dim_u(\unifescset_x)=\poincareexp(\identity_\fgfuchs).
        \end{equation}
    \end{mainthm}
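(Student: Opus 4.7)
By Theorem~\ref{mainthm:dimspec:general} we already have
$\poincareexp(\identity_\fgfuchs) \leq \dim_u(\unifescset_x) \leq \dim_u(\escset_x) \leq \dim_u(\accumset_x)$,
so the plan is to establish the matching upper bound $\dim_u(\accumset_x) \leq \poincareexp(\identity_\fgfuchs)$; all three equalities in \eqref{eqn:dimspec:eq} will then follow by squeezing, since $\unifescset_x\subseteq\escset_x\subseteq\accumset_x$.

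For each $N \in \bbN$, let $x_N\in\fgfuchs_0^N$ denote the reduced prefix of $x$ of length $N$, and introduce the set of first-hitting words
\[
W_N := \{\omega \in \shiftsp^* : |\chi(\omega) \wedge x| \geq N,\ \text{and}\ |\chi(\omega') \wedge x| < N\ \text{for every strict prefix } \omega'\ \text{of}\ \omega\}.
\]
The cylinders $\{[\omega] : \omega \in W_N\}$ are pairwise disjoint and cover $\accumset_x$ by the very definition of $\accumset_x$; moreover, since every $\omega \in W_N$ has $|\omega| \geq N$ and $u$ is bounded away from $0$, their $d_u$-diameters tend uniformly to zero as $N \to \infty$. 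A standard Hausdorff-measure computation then reduces the upper bound to the following partition-function estimate: for every $s > \poincareexp(\identity_\fgfuchs)$,
\[
\sup_{N \geq 1}\, \sum_{\omega \in W_N} e^{-s\, S_\omega u} < \infty.
\]

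The heart of the proof is to establish this estimate using the involution $\cdot^\dagger$ and the symmetries of $\chi$ and $u$. The guiding principle, familiar from Kesten-type results on symmetric group extensions, is that symmetry forces the restricted Poincar\'e exponent $\poincareexp(g) := \inf\{p : \sum_{\omega \in \chi^{-1}(g)} e^{-p S_\omega u} < \infty\}$ to be independent of $g \in \fgfuchs$. Concretely, for $\omega \in W_N$ I write $\chi(\omega) = x_N \beta_\omega$ in reduced form and fix once and for all an admissible representative $\eta_N$ of $x_N$; the symmetries then give $\chi(\eta_N^\dagger) = x_N^{-1}$ and $S_{\eta_N^\dagger} u = S_{\eta_N} u$. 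Using the transitivity of $\chi^{-1}(\identity_\fgfuchs)$ to supply bridging words of uniformly bounded $S_u$-weight, I construct an injection $\omega \mapsto \widetilde{\omega} \in \chi^{-1}(\identity_\fgfuchs)$ assembled from $\omega$ and $\eta_N^\dagger$ (together with a short bridge), and a Cauchy--Schwarz-style pairing argument --- analogous to the role of time-reversal symmetry in Patterson--Sullivan theory on symmetric hyperbolic covers --- should yield
\[
\sum_{\omega \in W_N} e^{-s\, S_\omega u} \leq C\, \sum_{\omega' \in \chi^{-1}(\identity_\fgfuchs)} e^{-s\, S_{\omega'} u}
\]
uniformly in $N$, with the right-hand side finite for $s > \poincareexp(\identity_\fgfuchs)$. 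The hardest step will be precisely this symmetric pairing: the bookkeeping must be arranged so that the $S_u$-weight $S_{\eta_N} u$ attached to $\eta_N^\dagger$ is recovered from the symmetric other half of the pairing rather than paid twice. A naive concatenation of the form $\omega \mapsto \omega\rho\omega^\dagger$ would double the weight and yield only the weaker upper bound $2\poincareexp(\identity_\fgfuchs)$; avoiding this doubling is exactly where the $\cdot^\dagger$-symmetry of $u$ becomes indispensable, and it is the principal technical obstacle of the proof.
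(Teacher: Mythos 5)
Your covering framework is fine and runs parallel to the paper's: each first-hitting word $\omega\in W_N$ satisfies $|\chi(\omega)|-|\chi(\omega)\wedge x|\leq 2\max_{a\in\alphabet}|\chi(a)|$ and $N\leq|\chi(\omega)\wedge x|\leq N+2\max_{a\in\alphabet}|\chi(a)|$, so $\sum_{\omega\in W_N}e^{-sS_\omega u}$ is bounded by a fixed finite number of restricted Poincar\'e series $Z(s|x_0\cdots x_{M-1}g)$ with $M$ in a bounded window around $N$ and $|g|$ bounded (the paper instead covers $\accumset_x$ by the single family $\mathcal{S}(x,\max_a|\chi(a)|)$ and sums over all $m$, but both reductions are legitimate). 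Thus everything hinges on a uniform bound of the type $\sup_{g\in\fgfuchs}Z(s|g)<+\infty$ for $s>\poincareexp(\identity_\fgfuchs)$, or at least summability of $Z(s|x_0\cdots x_{m-1})$ over $m$.

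That uniform bound is exactly what your proposal does not prove, and this is a genuine gap rather than a routine detail. First, your ``guiding principle'' --- that symmetry forces $\poincareexp(g)$ to be independent of $g$ --- misidentifies the role of symmetry: independence of the exponent holds with no symmetry at all (Proposition \ref{prop:delta}) and is far too weak, since it says nothing about the size of $Z(s|g)$ uniformly in $g$. Second, the injection you sketch, sending $\omega\in W_N$ to a word in $\chi^{-1}(\identity_\fgfuchs)$ assembled from $\omega$, a bridge, and $\eta_N^\dagger$, carries weight $S_{\widetilde\omega}u\approx S_\omega u+S_{\eta_N}u+O(1)$, so it only gives $\sum_{\omega\in W_N}e^{-sS_\omega u}\lesssim e^{sS_{\eta_N}u}\,Z(s|\identity_\fgfuchs)$, with a factor that grows exponentially in $N$; you name this overpaying problem yourself and explicitly leave it unresolved, so the claimed comparison with $Z(s|\identity_\fgfuchs)$ uniform in $N$ is unsupported. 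The paper's resolution is structurally different: it never injects the escaping words into $\chi^{-1}(\identity_\fgfuchs)$, but first proves almost supermultiplicativity of the restricted series, $Z(p|g)Z(p|g')\leq C_+(p)Z(p|gg')$ (Proposition \ref{prop:poincareseries:supadd}), by splitting each $\omega\in\chi^{-1}(g)$ at its first visit to $g$ and bookkeeping the leftover tail as a separate element of $\chi^{-1}(\identity_\fgfuchs)$ rather than discarding it; then, taking $g'=g^{-1}$ and using the $\dagger$-symmetries only to compare $Z(p|g^{-1})$ with $Z(p|g)$ (up to the factor $e^{pV^\dagger_u}$ --- note your assertion $S_{\eta_N^\dagger}u=S_{\eta_N}u$ is only true up to this additive constant), it gets $Z(p|g)^2\leq C_+(p)e^{pV^\dagger_u}Z(p|\identity_\fgfuchs)$, hence $\sup_g Z(p|g)<+\infty$, and then summability over $m$ via Proposition \ref{prop:decrateZ} (Corollary \ref{cor:Zgeodseries:conv:symmetric}). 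Until you supply an argument of this kind (or another proof of the uniform bound), the central step of your proof is missing.
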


    We give a rough sketch of the two proofs of Theorem \ref{mainthm:dimspec:general} and Theorem \ref{mainthm:dimspec:symmetric}. For each $g\in\fgfuchs$ and $p\in\bbR$, we define what we call the Poincar\'e series restricted to $\chi^{-1}(g)$ as
    \begin{equation*}
        Z(p|g):=\sum_{\omega\in\chi^{-1}(g)}\exp(-pS_\omega u).
    \end{equation*}
    Clearly $\poincareexp(\identity_\fgfuchs)$ is the critical exponent for convergence of $Z(p|\identity_\fgfuchs)$. The lower bound for $\dim_u(\unifescset_x)$ in Theorem \ref{mainthm:dimspec:general} will be derived by applying the mass distribution principle to a Borel probability measure whose construction relies on the divergence of $Z(p|\identity_\fgfuchs)$. This argument is inspired by the proofs in some recent papers \cite{gjk22,liu}. The proof of the upper bound for $\dim_u(\accumset_x)$ in Theorem~\ref{mainthm:dimspec:symmetric} will be shown to amount to the proof of
    \begin{equation*}
        \sum_{m=1}^\infty Z(p|x_0\cdots x_m)<+\infty.
    \end{equation*}
    This is achieved in the following way. Firstly, we will see in Proposition \ref{prop:decrateZ} that it always holds, even without the assumptions on symmetries, that
    \begin{equation*}
        \sum_{m=1}^\infty \frac{Z(p|x_0\cdots x_m)}{Z(p'|x_0\cdots x_m)}<+\infty
    \end{equation*}
    if $\poincareexp(\identity_\fgfuchs)<p'<p$. Assuming the conditions on symmetries and using the almost supermultiplicativity of restricted Poincar\'e series we are to show in Proposition~\ref{prop:poincareseries:supadd}, we will see that $\sup_{m\geq 1}Z(p'|x_0\cdots x_m)<+\infty$ for all $p'\in(\poincareexp(\identity_\fgfuchs),p)$. From the previous two facts, we have the convergence of $\sum_{m=1}^\infty Z(p|x_0\cdots x_m)$.

    Now we continue our discussions about random walks on free groups we made at the beginning. Let $\fgfuchs$ be a non-cyclic free group generated by some balanced finite set $\alphabet\subseteq \fgfuchs$; we say $E\subseteq \fgfuchs$ is balanced if $E^{-1}:=\set{g^{-1}|g\in E}$ equals $E$. Consider the random walk on $\fgfuchs$ generated by a probability measure $\mu_0$ whose support $\supp(\mu_0)$ is exactly $\alphabet$. The set of sample paths can thus be described in terms of a $\fgfuchs$-extension of an SFT. Let $\leftshift:\shiftsp\rightarrow\shiftsp$ be the full shift given by the alphabet $\alphabet$, and define $\chi(\omega)$ as the product $\omega_0\cdots\omega_{|\omega|-1}\in\fgfuchs$ for each $\omega=\omega_0\cdots\omega_{|\omega|-1}\in\shiftsp^*=\bigcup_{k=0}^\infty\alphabet^k$. Then, $\chi$ naturally gives a one-to-one correspondence between the elements of $\shiftsp$ and the sample paths starting from $\identity_\fgfuchs$, by sending every $\xi=\xi_0\xi_1\cdots\in\shiftsp$ to the sample path $(\chi(\xi_0\cdots\xi_{m-1}))_{m\in\bbN}$. As is remarked at the beginning of this paper, for any $x\in\partial\fgfuchs$, the set $\mathcal{R}:=\set{\xi\in\shiftsp|\liminf_{m\rightarrow+\infty}|\chi(\xi_0\cdots\xi_{m-1})|<+\infty}$ of recurrent sample paths and the set $\escset_x$ are both null sets, showing that
    \begin{equation*}
        \unifescset_x\subseteq\escset_x\subseteq\accumset_x\subseteq\mathcal{R}\cup\escset_x
    \end{equation*}
    are all negligible from the measure-theoretical perspective. On the other hand, our Theorem \ref{mainthm:dimspec:general} provides a \emph{positive} lower bound for the $u$-dimensions of all these sets. To see that Theorem \ref{mainthm:dimspec:general} can be applied, we note that $\chi$ is clearly a projection and $\chi^{-1}(\identity_\fgfuchs)$ is transitive because $\leftshift:\shiftsp\rightarrow\shiftsp$ is a full shift.

\subsection{Application to geodesic flows on free group covers of Schottky surfaces}
    Our results can also be applied to the geodesic flows on free group covers of Schottky surfaces. Let $\hyperbsp$ be the open unit disk in $\bbC$ endowed with the hyperbolic metric $d_h$. Let $G$ be a Schottky group generated by the set $\alphabet$ of side-pairing transformations; see Subsection \ref{subsec:schottky:def} for the definition. Then $G$ is known to be a finitely generated free group, and we assume that the rank of the free group $G$ is at least three. We call the quotient surface $\hyperbsp/G$ a Schottky surface. Let $\fgfuchs$ be a non-cyclic free group of rank lower than the rank of $G$, and $\chi:G\rightarrow \fgfuchs$ be a surjective group homomorphism. We are interested in the infinitely generated free group $\nsubg:=\ker(\chi)=\chi^{-1}(\identity_\fgfuchs)$, and the quotient surface $\hyperbsp/\nsubg$, which is a $\fgfuchs$-cover of the Schottky surface $\hyperbsp/G$. For every $z\in\hyperbsp$, we will write $\nsubg z$ to denote the image of $z$ under the natural projection mapping $\hyperbsp$ onto $\hyperbsp/\nsubg$.
    
    We shall investigate the geodesic flow on $\hyperbsp/\nsubg$. More precisely, we fix a point $\nsubg 0\in\hyperbsp/\nsubg$, and consider the geodesic rays which starts from $\nsubg 0$ but have different initial directions. In order to describe these geodesic rays, we can lift them to the universal covering space $\hyperbsp$. For every $\xi$ on the unit circle $\partial\hyperbsp$ and every $t\geq 0$, we let $\geoflow_\xi(t)$ be the unique point on the line segment joining $0$ and $\xi$ such that $d_h(0,\geoflow_\xi(t))=t$. Then every geodesic ray on $\hyperbsp/\nsubg$, parameterized by the length and starting from $\nsubg 0$, is $\nsubg \geoflow_\xi:[0,+\infty)\rightarrow\hyperbsp/\nsubg;\,t\mapsto\nsubg\geoflow_\xi(t)$ for some $\xi\in\partial\hyperbsp$.
    
    Let $\limitset(\nsubg)$ be the limit set of $\nsubg$. For $\xi\in\partial\hyperbsp\setminus\limitset(\nsubg)$, then $\nsubg\geoflow_\xi(t)$ eventually drops into a certain funnel of $\hyperbsp/\nsubg$. Hence it is more interesting to consider geodesic rays $\nsubg\geoflow_\xi$ for $\xi\in\limitset(\nsubg)$. Among the limit points, the radial limit points and uniform radial limit points are well studied and understood. If $\nsubg\geoflow_\xi(t)$ is recurrent to some compact subset of $\hyperbsp/\nsubg$, then $\xi$ is said to be a radial limit point $\nsubg$. If $\nsubg\geoflow_\xi(t)$ always stays in some compact subset of $\hyperbsp/\nsubg$ for all $t\geq 0$, then $\xi$ is said to be a uniform radial limit point of $\nsubg$. It has been long known that the radial limit set $\limitset_r(\nsubg)$ of $\nsubg$, consisting of the radial limit points of $\nsubg$, has Hausdorff dimension equal to the Poincar\'e exponent of $\nsubg$ \cite{radialdim}, defined as
    \begin{equation*}
        \poincareexp(\nsubg):=\inf\Set{s\geq 0|\sum_{h\in \nsubg}\exp(-sd(0,h0))<+\infty},
    \end{equation*}
    and the uniform radial limit set $\limitset_{ur}(\nsubg)$ of $\nsubg$, consisting of the uniform radial limit points of $\nsubg$, has the same Hausdorff dimension \cite{uradialdim}. On the other hand, since $\nsubg$ is a non-elementary normal subgroup of $G$, the limit set $\limitset(\nsubg)$ of $\nsubg$ is precisely the limit set $\limitset(G)$ of $G$. This implies that $\hausdim(\limitset(\nsubg))=\hausdim(\limitset(G))$, which is further equal to the Poincar\'e exponent $\poincareexp(G)$ of $G$ because $\limitset(G)=\limitset_r(G)$ for the Schottky group $G$ which is convex-cocompact. Now that $G/\nsubg$ is a non-cyclic free group, which is non-amenable, \cite{Stad13} shows that $\poincareexp(G)>\poincareexp(N)$. Summarizing the known facts we mentioned in this paragraph, we have
    \begin{equation} \label{eqn:dimgap:radial}
        \hausdim(\limitset_r(\nsubg)) =\hausdim(\limitset_{ur}(\nsubg))=\poincareexp(\nsubg)<\poincareexp(G)=\hausdim(\nsubg).
    \end{equation}
    
    Now we turn our attention to the limit points $\xi$ for which $\nsubg\geoflow_\xi(t)$ shows a transient behavior, and we will see that a dimension gap similar to \eqref{eqn:dimgap:radial} also occurs. In Subsection \ref{subsec:schottky:def}, we will describe a natural one-to-one correspondence between the reduced sequences over $\alphabet$ and the limit points of $\nsubg$, which are precisely the limit points of the Schottky group $G$. Hence for every $x\in\partial\fgfuchs$, the sets $\accumset_x$, $\escset_x$ and $\unifescset_x$ naturally become
    \begin{align*}
        \accumset_x(\nsubg) &:=\Set{\xi\in\limitset(\nsubg)|\limsup_{m\rightarrow+\infty}|\chi(\xi_0\cdots\xi_{m-1})\wedge x|=+\infty};\\
        \escset_x(\nsubg) &:=\Set{\xi\in\limitset(\nsubg)|\lim_{m\rightarrow+\infty}|\chi(\xi_0\cdots\xi_{m-1})\wedge x|=+\infty};\\
        \unifescset_x(\nsubg) &:=\Set{\xi\in\escset_x(\nsubg)|\sup_{m\geq 1}\left\{|\chi(\xi_0\cdots\xi_{m-1})|-|\chi(\xi_0\cdots\xi_{m-1})\wedge x|\right\}<+\infty}.
    \end{align*}
    In addition, we also introduce the \emph{entire escaping set}, defined as
    \begin{equation*}
        \escset(\nsubg) :=\bigcup_{x\in\partial\fgfuchs}\escset_x(\nsubg)=\Set{\xi\in\limitset(\nsubg)|\lim_{m\rightarrow+\infty}|\chi(\xi_0\cdots\xi_{m-1})|=+\infty}.
    \end{equation*}
    Since the Schottky group $G$ is convex-cocompact, there is some compact subset $K$ of a fundamental domain $\fundpoly$ of $G$ such that $\geoflow_\xi(t)$ stays in $\bigcup_{g\in G}gK$ for all $t\geq 0$, provided that $\xi\in\limitset(G)=\limitset(\nsubg)$ \cite[Section II.1]{dalbo}. Hence for every $t\geq 0$ and $\xi\in\limitset(\nsubg)$, $\geoflow_\xi(t)$ is in some $\xi_0\cdots\xi_{m(t)-1}K$, and we have
    \begin{equation*}
        d_h(\nsubg\geoflow_\xi(t),\nsubg\xi_0\cdots\xi_{m(t)-1}0) \leq d_h(\geoflow_\xi(t),\xi_0\cdots\xi_{m(t)-1}0)\leq \diam(K)<+\infty,
    \end{equation*}
    where $d_h(\nsubg\geoflow_\xi(t),\nsubg \xi_0\cdots\xi_{m(t)-1}0)$ is the distance between the points $\nsubg\geoflow_\xi(t)$ and $\nsubg\xi_0\cdots\xi_{m(t)-1}0$ on the quotient surface $\disk/\nsubg$. This shows that the four sets $\accumset_x(\nsubg)$, $\escset_x(\nsubg)$, $\unifescset_x(\nsubg)$ and $\escset(\nsubg)$ actually describe transient behaviors of $\nsubg\geoflow_\xi$, although the geodesic flow does not show up in their definitions. We claim that the following dimension gap exists.

    \begin{mainthm} \label{mainthm:geodflow}
        Let $G$ be a Schottky group, $\fgfuchs$ be a non-cyclic free group and $\chi:G\rightarrow\fgfuchs$ be a surjective group homomorphism. Let $N\triangleleft G$ be the kernel of $\chi$. Then, for all $x\in\partial\fgfuchs$, we have
        \begin{align*}
            \hausdim(\accumset_x(\nsubg))= \hausdim(\escset_x(\nsubg)) &=\hausdim(\unifescset_x(\nsubg))\\&=\poincareexp(\nsubg)<\poincareexp(G)=\hausdim(\limitset(\nsubg))=\hausdim(\escset(\nsubg)).
        \end{align*}
    \end{mainthm}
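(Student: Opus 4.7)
The plan is to reduce Theorem~\ref{mainthm:geodflow} to Theorem~\ref{mainthm:dimspec:symmetric} via the standard symbolic coding of the Schottky group, and then handle the strict dimension gap separately. First I would set up a topologically transitive SFT $\leftshift:\shiftsp\rightarrow\shiftsp$ over the alphabet $\alphabet$ of side-pairing transformations of $G$, with incidence matrix forbidding a generator from being immediately followed by its inverse; this yields the coding of $\limitset(G)$ by reduced infinite sequences already described in Subsection~\ref{subsec:schottky:def}. The homomorphism $\chi:G\rightarrow\fgfuchs$ descends to a projection $\chi:\shiftsp^*\rightarrow\fgfuchs$ via $\chi(\omega)=\chi(\omega_0)\cdots\chi(\omega_{|\omega|-1})$, and transitivity of $\chi^{-1}(\identity_\fgfuchs)$ follows from the rank assumption on $G$ together with the infiniteness of $\nsubg$: given $a,a'\in\alphabet$, one can conjugate a chosen reduced representative of a nontrivial element of $\nsubg$ so that it neither begins with $a^{-1}$ nor ends with $(a')^{-1}$.

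Next, I would choose $u:\shiftsp\rightarrow(0,+\infty)$ to be a H\"older continuous geometric potential reading off the derivative cocycle of the associated Schottky expanding map, so that the $d_u$-metric on $\shiftsp$ is, through the coding, bi-Lipschitz equivalent to the restriction of the spherical metric on $\partial\hyperbsp$ to $\limitset(G)$. Under this identification, the $u$-dimension of any subset of $\shiftsp$ becomes its Hausdorff dimension on the boundary, and the restricted exponent $\poincareexp(\identity_\fgfuchs)$ from Theorem~\ref{mainthm:dimspec:general} coincides with the classical Poincar\'e exponent $\poincareexp(\nsubg)$, since $\sum_{\omega\in\chi^{-1}(\identity_\fgfuchs)}\exp(-pS_\omega u)$ is, up to bounded multiplicative error, the usual series $\sum_{h\in\nsubg}\exp(-pd_h(0,h0))$.

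To apply Theorem~\ref{mainthm:dimspec:symmetric} I would use the involution $\cdot^\dagger$ on $\shiftsp^*$ sending $\omega_0\cdots\omega_{k-1}$ to $\omega_{k-1}^{-1}\cdots\omega_0^{-1}$. Since $\chi$ is a group homomorphism it intertwines $\cdot^\dagger$ with inversion on $\fgfuchs$, giving the required symmetry of the extension; the symmetry of $u$ follows from the standard fact that for a Schottky expanding map the derivative cocycles along a word and along its reversed inverse are cohomologous via a bounded coboundary. Theorem~\ref{mainthm:dimspec:symmetric} then yields
\begin{equation*}
    \hausdim(\accumset_x(\nsubg))=\hausdim(\escset_x(\nsubg))=\hausdim(\unifescset_x(\nsubg))=\poincareexp(\nsubg).
\end{equation*}

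For the dimension gap, $\poincareexp(\nsubg)<\poincareexp(G)$ is the amenability dichotomy of \cite{Stad13}, applicable because $G/\nsubg\cong\fgfuchs$ is non-cyclic free and hence non-amenable. Since $\nsubg$ is a non-elementary normal subgroup of the convex-cocompact group $G$, one has $\limitset(\nsubg)=\limitset(G)$ and hence $\hausdim(\limitset(\nsubg))=\poincareexp(G)$. For the entire escaping set, $\limitset(\nsubg)\setminus\escset(\nsubg)$ consists of $\xi$ for which $\nsubg\geoflow_\xi(t)$ returns infinitely often to some compact subset of $\hyperbsp/\nsubg$; such $\xi$ lie in the radial limit set of $\nsubg$, whose Hausdorff dimension equals $\poincareexp(\nsubg)$ by \cite{radialdim}. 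Combined with $\poincareexp(\nsubg)<\poincareexp(G)=\hausdim(\limitset(\nsubg))$, this forces $\hausdim(\escset(\nsubg))=\poincareexp(G)$. The main technical obstacle is the first two paragraphs: carefully verifying that this symbolic framework genuinely recovers the geometric Poincar\'e exponents and boundary Hausdorff dimensions and that the symmetry hypotheses hold for the natural choice of $u$; once this translation is in place, the dimension gap follows cleanly.
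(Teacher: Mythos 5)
Your proposal is correct and follows essentially the same route as the paper: reduce to Theorem~\ref{mainthm:dimspec:symmetric} via the Bowen--Series coding of $\limitset(G)$, verify transitivity of $\chi^{-1}(\identity_\fgfuchs)$ by conjugating a nontrivial kernel element, take $\cdot^\dagger$ to be the reversed inverse with symmetry of $u$ coming from the isometry identity $d_h(0,\omega 0)=d_h(0,\omega^{-1}0)$ (the paper makes this precise through the cohomologous positive Birkhoff average of $\log|f'\circ\pi|$ and the bounded comparison with hyperbolic distance), and obtain the gap from Stadlbauer's amenability criterion together with the decomposition $\limitset(\nsubg)=\limitset_r(\nsubg)\cup\escset(\nsubg)$ and the Bishop--Jones formula for the radial limit set. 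No essential difference from the paper's argument.
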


    The first three equalities are actually a consequence of our Theorem \ref{mainthm:dimspec:symmetric}; the proof will be given in Subsection \ref{subsec:appl:proof}. The only inequality and the last second equality have been explained in the previous paragraph. To see the last equality, we note that $\limitset(\nsubg)=\limitset_r(\nsubg)\cup\escset(\nsubg)$, implying that
    \begin{equation*}
        \hausdim(\limitset(\nsubg)) =\max\set{\hausdim(\limitset_r(\nsubg)),\hausdim(\escset(\nsubg))}.
    \end{equation*}
    From \eqref{eqn:dimgap:radial}, we have $\hausdim(\limitset(\nsubg))>\hausdim(\limitset_r(\nsubg))$, so we obtain $\hausdim(\limitset(\nsubg))=\hausdim(\escset(\nsubg))$. In fact, applying Bishop's result \cite{bishop} to our case, we can say even more; the set of $\xi\in\limitset(\nsubg)$ for which $\nsubg\geoflow_\xi(t)$ escapes with a positive speed actually also has Hausdorff dimension equal to $\hausdim(\limitset(\nsubg))$.

    Lastly, we make a remark on some estimates about $\poincareexp(\nsubg)$. As we will point out in Section \ref{sec:appl}, the dynamics of $\limitset(G)$ given by the Bowen-Series map is conjugate to, and thus can be viewed as a topologically mixing SFT. Then the group homomorphism $\chi$ is in fact a homomorphic projection, and $\poincareexp(\nsubg)$ can be shown to be exactly $\poincareexp(\identity_\fgfuchs)$. Hence, by Theorem \ref{mainthm:dimspec:general}, we have the positivity of $\poincareexp(\nsubg)$. As a matter of fact, $\poincareexp(\nsubg)$ has a much better lower bound. Theorem 2 in \cite{FS} shows that $\poincareexp(\nsubg)\geq \poincareexp(G)/2$. Moreover, since the Schottky group $G$ is of divergence type, we actually further have $\poincareexp(\nsubg)>\poincareexp(G)/2$; see \cite[Theorem 1.1]{jaerisch}.

\subsection{Plan of this paper}
    The rest of this paper is organized in the following way. Section \ref{sec:pre} defines the notions we need to prove the main theorems, Theorem \ref{mainthm:dimspec:general} and Theorem \ref{mainthm:dimspec:symmetric}. In Section~\ref{sec:rpoincareseries} we explore some crucial properties of restricted Poincar\'e series, which will play important roles in the proofs of our main theorems. Section \ref{sec:mainproof} gives the proofs of Theorem \ref{mainthm:dimspec:general} and Theorem \ref{mainthm:dimspec:symmetric}. Section \ref{sec:appl} provides details about the application of Theorem \ref{mainthm:dimspec:symmetric} to geodesic flows on free group covers of Schottky surfaces. In Appendix \ref{app:lemma:proof}, we prove a technical lemma used in the proof of Theorem \ref{mainthm:dimspec:general}.
    
    \emph{Acknowledgments.} \textcolor{red}{TBA.}

\section{Preliminaries} \label{sec:pre}

\subsection{Symbolic dynamics} \label{subsec:pre:symbdyn}
    Let $\alphabet$ be a finite set whose cardinality is at least two, and let $\incmat:\alphabet\times\alphabet\rightarrow\set{0,1}$ satisfy that for all $a\in\alphabet$, there is some $a'\in\alphabet$ such that $\incmat(a,a')=1$. Then, the set
    \begin{equation*}
        \shiftsp :=\Set{\xi=\xi_0\xi_1\cdots\in\alphabet^\bbN|\incmat(\xi_j,\xi_{j+1})=1,\,\forall j\in\bbN},
    \end{equation*}
    which is called the shift space, is non-empty, and define $\leftshift:\shiftsp\rightarrow\shiftsp$ as the left shift, which sends $\xi=\xi_0\xi_1\cdots\in\shiftsp$ to $\leftshift(\xi)=\xi_1\xi_2\cdots\in\shiftsp$. Clearly, $\leftshift:\shiftsp\rightarrow\shiftsp$ is a dynamical system, and is called a subshift of finite type (SFT) given by the alphabet $\alphabet$ and the incidence matrix $\incmat$.

    Given a word $\omega=\omega_0\cdots\omega_{|\omega|-1}$ over $\alphabet$ whose length is $|\omega|$, we define its cylinder set as
    \begin{equation*}
        [\omega] :=\set{\xi\in\shiftsp|\xi_0\cdots\xi_{|\omega|-1}=\omega}\subseteq\shiftsp.
    \end{equation*}
    We say that a word over $\alphabet$ is $\incmat$-admissible if its cylinder set is non-empty, and denote by $\shiftsp^*$ the set of all $\incmat$-admissible words over $\alphabet$. For convenience, we define the cylinder set of the empty word $\epsilon$ to be $\shiftsp$, from which it follows that $\epsilon\in\shiftsp^*$. We say that $\omega\in\shiftsp^*\cup\shiftsp$ is a prefix of $\omega'\in\shiftsp^*\cup\shiftsp$ if $|\omega|\leq |\omega'|$ and $\omega=\omega'_0\cdots\omega'_{|\omega|-1}$, where we think of the length of an element of $\shiftsp$ to be $+\infty$. We define the length $|\epsilon|$ of the empty word to be zero, and regard $\epsilon$ as a common prefix of all the words in $\shiftsp^*$. The longest common prefix of $\omega\in\shiftsp^*\cup\shiftsp$ and $\omega'\in\shiftsp^*\cup\shiftsp$ will be denoted by $\omega\wedge\omega'$.

    Given a function $\phi:\shiftsp\rightarrow\bbR$ and $n\in\bbN$, we define the $n$-th Birkhoff sum of $f$ to be the function $S_n\phi:= \sum_{k=0}^{n-1}\phi\circ\leftshift^k$ defined on $\shiftsp$. We say that a function $\phi:\shiftsp\rightarrow\bbR$ is H\"older continuous if \begin{equation*}
        \sup_{\substack{\xi,\eta\in\shiftsp\\\xi\neq \eta}} \frac{|\phi(\xi)-\phi(\eta)|}{\exp(-\alpha_u|\xi\wedge\eta|)}<+\infty.
    \end{equation*}
    It is widely known that the distortion constant of a H\"older continuous function $\phi:\shiftsp\rightarrow\bbR$, defined by        
    \begin{equation*}
        V_\phi :=\sup_{\omega\in\shiftsp^*}\sup_{\xi,\eta\in[\omega]}|S_{|\omega|}\phi(\xi)-S_{|\omega|}\phi(\eta)|,
    \end{equation*}
    is finite; see for example \cite[Lemma 2.3.1]{MauUrb03}. This property is called the bounded distortion property of the H\"older continuous function $\phi$. Given any H\"older continuous function $\phi:\shiftsp\rightarrow\bbR$, we define
    \begin{equation*}
        S_\omega \phi:=\sup_{\xi\in[\omega]}S_{|\omega|}\phi(\xi),
    \end{equation*}
    for every $\omega\in\shiftsp^*$. The bounded distortion property of $\phi$ then guarantees that for all $\omega\in\shiftsp^*$ and all $\xi\in[\omega]$, we have $S_\omega \phi-V_\phi\leq S_{|\omega|}\phi(\xi)\leq S_\omega\phi$.
    
    In order to endow $\shiftsp$ with a metric, we take a H\"older continuous function $u:\shiftsp\rightarrow(0,+\infty)$, and define a metric $d_u$ of $\shiftsp$ by setting
    \begin{equation*}
        d_u(\xi,\eta) :=\exp(S_{\xi\wedge\eta}(-u))
    \end{equation*}
    for any two distinct $\xi,\eta\in\shiftsp$. It is not hard to check that $d_u$ is an ultrametric, and thus a metric \cite[Proposition 2.4]{liuthesis}. Moreover, for H\"older continuous $u:\shiftsp\rightarrow(0,+\infty)$, the Hausdorff dimension given by $d_u$ is exactly the $u$-dimension in \cite{BarSch00}. The author's thesis contains a proof of this fact \cite[Proposition~3.3]{liuthesis}.

    We will always assume that $\leftshift:\shiftsp\rightarrow\shiftsp$ is topologically transitive, which is equivalent to saying that for all $a,a'\in\alphabet$, there is some $\omega\in\shiftsp^*$ such that the concatenation $a\omega a'$ is also $\incmat$-admissible. Given a topologically transitive SFT $\leftshift:\shiftsp\rightarrow\shiftsp$, we will say that $E\subseteq\shiftsp^*$ is \emph{transitive} if for any $a,a'\in\alphabet$, there is some $\omega\in\shiftsp^*$ such that $a\omega a'\in\shiftsp^*$. Since $\alphabet$ is finite, every transitive $E\subseteq\shiftsp^*$ admits the existence of a finite transitive subset of $E$.

\subsection{Coding of Gromov boundary of free groups} \label{subsec:freegroups}
    Let $\fgfuchs$ be a finitely generated free group of rank $n$, freely generated by $\set{e_1,\cdots,e_n}\subseteq\fgfuchs$. Set
    \begin{equation*}
        \fgfuchs_0 :=\set{e_1,e_1^{-1},\cdots,e_n,e_n^{-1}},
    \end{equation*}
    and we call it a balanced free generating set. Let $\fgfuchs_0$ be the alphabet. Define the incidence matrix $\incmat_\fgfuchs:\fgfuchs_0\times\fgfuchs_0\rightarrow\set{0,1}$ by letting $\incmat_\fgfuchs(e,e')=0$ if and only if $e'=e^{-1}$ for all $e,e'\in\fgfuchs_0$. In this case, we call $\incmat_\fgfuchs$-admissible words reduced words. Then, it is an elementary fact that every group element of $\fgfuchs$ can be uniquely written as a reduced word over $\fgfuchs_0$. In particular, the identity $\identity_\fgfuchs$ of $\fgfuchs$ is represented by the empty word. Henceforth, we will not distinguish group elements of $\fgfuchs$ with the reduced words representing them, so $\fgfuchs$ is identified with the set of all reduced words over $\fgfuchs_0$.

    The Gromov boundary $\partial\fgfuchs$ of the finitely generated free group $\fgfuchs$ is also easy to describe using the language from the symbolic dynamics. Indeed, every element of $\partial\fgfuchs$ can be uniquely represented by a reduced infinite sequence over $\fgfuchs_0$, where we call an infinite sequence over $\fgfuchs_0$ a reduced sequence if it is $\incmat_\fgfuchs$-admissible, as we did for words of finite length. Hence, we regard $\partial\fgfuchs$ as the shift space of the SFT given by the alphabet $\fgfuchs_0$ and the incidence matrix $\incmat_\fgfuchs$.

    Now that we have identified $\fgfuchs$ with the set of reduced words over $\fgfuchs_0$ and $\partial\fgfuchs$ with the shift space of the SFT given by the alphabet $\fgfuchs_0$ and the incidence matrix $\incmat_\fgfuchs$, we will freely use the notations from symbolic dynamics introduced in the previous subsection when we make discussions about $\fgfuchs$ and $\partial\fgfuchs$.

\subsection{Homomorphic projections and group extensions of SFTs} \label{subsec:proj}
    Let $\leftshift:\shiftsp\rightarrow\shiftsp$ be a topologically transitive SFT given by the alphabet $\alphabet$ and the incidence matrix $\incmat:\alphabet\times\alphabet\rightarrow\set{0,1}$. Denote by $\shiftsp^*$ the set of all $\incmat$-admissible words over $\alphabet$. Given a finitely generated free group $\fgfuchs$, a surjection $\chi:\shiftsp^*\rightarrow\fgfuchs$ is said to be a \emph{(homomorphic) projection} if $\chi(\omega\omega') =\chi(\omega)\chi(\omega')$ for any $\omega,\omega'\in\shiftsp^*$ satisfying $\omega\omega'\in\shiftsp^*$. In what follows, the projection $\chi:\shiftsp^*\rightarrow\fgfuchs$ will always assumed to satisfy that $\chi^{-1}(\identity_\fgfuchs)$ is transitive. In this case, the following claim holds.
    \begin{prop} \label{prop:unifirred}
        Suppose that $\chi^{-1}(\identity_\fgfuchs)$ is transitive. Then, for any $r\geq 0$, there exists some $L(r)<+\infty$ such that for all $g,g'\in\fgfuchs$ with $|g^{-1}g'|\leq r$ and all $\omega'\in\chi^{-1}(g')$, there is some $\omega\in\chi^{-1}(g)$ for which $\omega'$ is a prefix of $\omega$ and $|\omega|-|\omega'|\leq L(r)$.
    \end{prop}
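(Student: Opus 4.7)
The plan is to construct, for any given $g,g' \in \fgfuchs$ with $|g^{-1}g'|\leq r$ and any $\omega' \in \chi^{-1}(g')$, an appending word $\tau$ of bounded length so that $\omega := \omega'\tau$ lies in $\chi^{-1}(g)$. Setting $h := \chi(\omega')^{-1}g = (g')^{-1}g$, one has $|h| = |g^{-1}g'| \leq r$, and the task reduces to producing $\tau \in \shiftsp^* \cup \{\epsilon\}$ with $\omega'\tau \in \shiftsp^*$, $\chi(\tau) = h$, and $|\tau| \leq L(r)$ for some $L(r)$ depending only on $r$.

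The construction uses two finite ``lookup tables'' chosen once and for all, independently of $g,g',\omega'$. First, since $\chi$ is surjective and the ball $\{h\in\fgfuchs:|h|\leq r\}$ is finite, we fix a representative $\omega_h \in \chi^{-1}(h)$ for each such $h$. Second, since $\chi^{-1}(\identity_\fgfuchs)$ is transitive and $\alphabet$ is finite, we fix, for each ordered pair $(a,b) \in \alphabet\times\alphabet$, a connector $\rho_{a,b} \in \chi^{-1}(\identity_\fgfuchs)$ with $a\rho_{a,b}b \in \shiftsp^*$.

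With these in hand, $\tau$ is assembled by cases. If $h = \identity_\fgfuchs$, take $\tau = \epsilon$, so $\omega'\tau = \omega' \in \shiftsp^*$. If $\omega' = \epsilon$, take $\tau = \omega_h$. Otherwise, letting $a$ denote the last letter of $\omega'$ and $b$ the first letter of $\omega_h$, set $\tau := \rho_{a,b}\omega_h$; admissibility of $\omega'\tau$ follows by splicing together the admissibility of $\omega'$, of $a\rho_{a,b}b$, and of $\omega_h$, and $\chi(\tau) = \chi(\rho_{a,b})\chi(\omega_h) = \identity_\fgfuchs \cdot h = h$ is immediate from the homomorphism property. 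Defining $L(r) := \max\{|\rho_{a,b}| + |\omega_h| : a,b\in\alphabet,\ h\in\fgfuchs,\ |h|\leq r\}$ — a maximum over a finite index set, hence finite — uniformly bounds $|\tau|$ in all three cases.

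There is no serious conceptual obstacle: the statement is essentially the observation that the two standing hypotheses on $\chi$ — surjectivity and transitivity of $\chi^{-1}(\identity_\fgfuchs)$ — supply, through a finite amount of data, a uniform way to complete any admissible word to a prescribed group element within bounded additional length. The only care required is in checking admissibility at the splicing junctions and in isolating the corner cases $h=\identity_\fgfuchs$ (where $\tau$ must be permitted to be empty) and $\omega'=\epsilon$ (where no connector is needed, since there is no ``last letter'' of $\omega'$ to match).
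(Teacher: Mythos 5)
Your proposal is correct and follows essentially the same route as the paper: append to $\omega'$ a connector word from a finite family supplied by the transitivity of $\chi^{-1}(\identity_\fgfuchs)$ followed by a fixed preimage of $(g')^{-1}g$, and bound the added length by a maximum over the finitely many group elements of length at most $r$ (and connector pairs). Your explicit treatment of the corner cases $h=\identity_\fgfuchs$ and $\omega'=\epsilon$ is a minor refinement the paper leaves implicit.
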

    \begin{proof}
        Since $\chi^{-1}(\identity_\fgfuchs)$ is transitive and $\alphabet$ is finite, there is a finite transitive subset $\mathcal{I}$ of $\chi^{-1}(\identity_\fgfuchs)$. Also for $r\geq 0$ and $h\in\fgfuchs$ with $|h|\leq r$, take $\tau(h)\in\chi^{-1}(h)$. Then, given any $g,g'\in\fgfuchs$ with $|g^{-1}g'|\leq r$ and any $\omega'\in\chi^{-1}(g')$, there is some $\rho\in\mathcal{I}$ such that $\omega:=\omega'\rho\tau((g')^{-1}g)\in\shiftsp^*$. Clearly, $\omega'$ is a prefix of $\omega$. Also note that
        \begin{equation*}
            \chi(\omega) =\chi(\omega')\chi(\rho)\chi(\tau((g')^{-1}g)) =g'\cdot\identity_\fgfuchs\cdot (g')^{-1}g=g,
        \end{equation*}
        indicating that $\omega\in\chi^{-1}(g)$. Lastly, observe that
        \begin{equation*}
            |\omega|-|\omega'|=|\rho|+|\tau((g')^{-1}g)|\leq L(r):=\max_{\rho'\in\mathcal{I}}|\rho'|+\max\set{\left|\tau(h)\right| | h\in\fgfuchs,\,|h|\leq r}.
        \end{equation*}
        Since there are finitely many $h\in\fgfuchs$ such that $|h|\leq r$, we have $L(r)<+\infty$, which concludes the proof.
    \end{proof}

    Lastly we define the symmetry of the $\fgfuchs$-extended systems and the symmetry of the potential $u$ on $\shiftsp$. Let $\cdot^\dagger:\shiftsp^*\rightarrow\shiftsp^*$ be an involution, which means that $(\omega^\dagger)^\dagger=\omega$ for all $\omega\in\shiftsp^*$. We say that the $\fgfuchs$-extension of the SFT $\leftshift:\shiftsp\rightarrow\shiftsp$ given by the projection $\chi$ is \emph{symmetric} with respect to $\cdot^\dagger$ if
    \begin{enumerate}
        \item   for any $\omega,\tau\in\shiftsp^*$ satisfying $\omega\tau\in\shiftsp^*$, we have $\tau^\dagger\omega^\dagger\in\shiftsp^*$ and $(\omega\tau)^\dagger=\tau^\dagger\omega^\dagger$;
        \item   $\chi(\omega^\dagger)=\chi(\omega)^{-1}$ for all $\omega\in\shiftsp^*$.
    \end{enumerate}
    A H\"older continuous function $u:\shiftsp\rightarrow(0,+\infty)$ is said to be \emph{symmetric} with respect to $\cdot^\dagger$ if
    \begin{equation} \label{eqn:distconst:dagger}
        V^\dagger_u:=\sup_{\omega\in\shiftsp^*} |S_{\omega}u-S_{\omega^\dagger}u|<+\infty.
    \end{equation}


\section{Restricted Poincar\'e series and its properties} \label{sec:rpoincareseries}
    Let $\leftshift:\shiftsp\rightarrow\shiftsp$ be a topologically transitive SFT, $\fgfuchs$ be a non-cyclic finitely generated free group, and $\chi:\shiftsp^*\rightarrow\fgfuchs$ be a projection with $\chi^{-1}(\identity_\fgfuchs)$ being transitive. Let $u:\shiftsp\rightarrow(0,+\infty)$ be a H\"older continuous function. For each $g\in\fgfuchs$ and $p\in\bbR$, we define what we call the \emph{Poincar\'e series restricted to} $\chi^{-1}(g)$ with exponent $p$ as
    \begin{equation*}
        Z(p|g):=\sum_{\omega\in\chi^{-1}(g)}\exp(-pS_\omega u).
    \end{equation*}
    Since $\chi$ is assumed to be surjective, $\chi^{-1}(g)$ can never be empty, implying that $Z(p|g)>0$ for all $g\in\fgfuchs$ and $p\in\bbR$. For each $g\in\fgfuchs$, define the \emph{Poincar\'e exponent restricted to} $\chi^{-1}(g)$ as
    \begin{equation*}
        \poincareexp(g):=\inf\set{p\in\bbR|Z(p|g)<+\infty}.
    \end{equation*}
    For sufficiently large $p$, we have
    \begin{equation*}
        Z(p|g)\leq \sum_{\omega\in\shiftsp^*}\exp(-pS_\omega u)<+\infty,\,\forall g\in\fgfuchs,
    \end{equation*}
    so $\poincareexp(g)$ is finite for every $g\in\fgfuchs$. In fact, $\poincareexp(g)$ does not depend on $g$, as the following proposition asserts.
    \begin{prop} \label{prop:delta}
        For any $p\in\bbR$ and any $g,g'\in\fgfuchs$, $Z(p|g)<+\infty$ if and only $Z(p|g')<+\infty$. In particular, we have $\poincareexp(g)=\poincareexp(\identity_\fgfuchs)$ for all $g\in\fgfuchs$.
    \end{prop}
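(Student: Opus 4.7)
The plan is to prove a two-sided comparison of the form $c_1 Z(p|g) \le Z(p|g') \le c_2 Z(p|g)$, where the constants depend on $p$ and on $|g^{-1}g'|$ but not on which $g,g'$ are chosen. The last statement then follows by taking $g'=\identity_\fgfuchs$, and the equivalence of the definitions of $\poincareexp(g)$ and $\poincareexp(\identity_\fgfuchs)$ is immediate.

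First I would invoke Proposition \ref{prop:unifirred} with $r := |g^{-1}g'|$ to obtain a constant $L(r)<+\infty$. For every $\omega'\in\chi^{-1}(g')$, pick some extension $\omega=\omega(\omega')\in\chi^{-1}(g)$ having $\omega'$ as a prefix with $|\omega|-|\omega'|\le L(r)$. Write $\omega=\omega'\eta$. Since $\shiftsp$ is compact, $u$ is bounded; set $M:=\|u\|_\infty<+\infty$. For any $\xi\in[\omega]\subseteq[\omega']$ we have $S_{|\omega|}u(\xi)=S_{|\omega'|}u(\xi)+S_{|\eta|}u(\leftshift^{|\omega'|}\xi)$, and taking the supremum over $\xi\in[\omega]$ together with the bounded distortion estimate $S_{|\omega'|}u(\xi)\ge S_{\omega'}u-V_u$ and $S_{|\eta|}u\ge 0$ yields the two-sided bound
\begin{equation*}
    S_{\omega'}u - V_u \;\le\; S_{\omega(\omega')}u \;\le\; S_{\omega'}u + M\,L(r).
\end{equation*}
Consequently, for any $p\in\bbR$, there is a constant $c(p,r)>0$ (depending only on $p$, $r$, $V_u$ and $M$) such that $c(p,r)^{-1}\exp(-pS_{\omega'}u)\le \exp(-pS_{\omega(\omega')}u)\le c(p,r)\exp(-pS_{\omega'}u)$.

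Next I would observe that the map $\omega'\mapsto\omega(\omega')$ is at most $(L(r)+1)$-to-one: if $\omega(\omega')=\omega$, then $\omega'$ is a prefix of $\omega$ of some length in $\{|\omega|-L(r),\ldots,|\omega|\}$, and the prefix of $\omega$ of each given length is uniquely determined. Summing the inequality from the previous paragraph over $\omega'\in\chi^{-1}(g')$ therefore gives
\begin{equation*}
    Z(p|g') \;=\; \sum_{\omega'\in\chi^{-1}(g')}\exp(-pS_{\omega'}u)\;\le\; c(p,r)\sum_{\omega'\in\chi^{-1}(g')}\exp(-pS_{\omega(\omega')}u)\;\le\; c(p,r)\,(L(r)+1)\,Z(p|g).
\end{equation*}
Swapping the roles of $g$ and $g'$ yields the reverse estimate with the same constant (since $|g'^{-1}g|=r$). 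Hence $Z(p|g)<+\infty$ iff $Z(p|g')<+\infty$, which proves the claim and forces $\poincareexp(g)=\poincareexp(\identity_\fgfuchs)$ for all $g\in\fgfuchs$.

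The only subtle point is the multiplicity count for the map $\omega'\mapsto\omega(\omega')$; I expect that to be the main place where one must be careful, since a priori different $\omega'$ of the same length could be sent to the same $\omega$, but the prefix structure rules this out. The rest of the argument is purely a distortion calculation combined with the uniform irreducibility Proposition \ref{prop:unifirred}.
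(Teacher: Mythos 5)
Your proposal is correct and follows essentially the same route as the paper's own proof: both use Proposition \ref{prop:unifirred} to build a prefix-extension map between the fibers $\chi^{-1}(g)$ and $\chi^{-1}(g')$, control the change in $S_\omega u$ via bounded distortion and $\|u\|_\infty L(r)$, and bound the multiplicity of the map by $L(r)+1$ before summing (the paper merely orients the map from $\chi^{-1}(g)$ to $\chi^{-1}(g')$ rather than the reverse). Your attention to the multiplicity count is exactly the point the paper also handles, so there is no gap.
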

    \begin{proof}
        To show the first claim, take $g,g'\in\fgfuchs$ and $p\in\bbR$ arbitrarily. By Proposition~\ref{prop:unifirred}, we can take some $L_{g,g'}>0$ and some map $\theta:\chi^{-1}(g)\rightarrow\chi^{-1}(g')$ such that $\omega$ is a prefix of $\theta(\omega)$ and $|\theta(\omega)|-|\omega|\leq L_{g,g'}$ for all $\omega\in\chi^{-1}(g)$. Then, we immediately have
        \begin{equation} \label{eqn:prop:poincareseries:convergence:proof}
            S_\omega u-V_u\leq S_{\theta(\omega)}u\leq S_{\omega} u+L_{g,g'}\|u\|_\infty,\,\forall\omega\in\chi^{-1}(g),
        \end{equation}
        where we set $\|u\|_\infty:=\sup_{\xi\in\shiftsp}u(\xi)<+\infty$. Also note that for any $\omega'\in\theta(\chi^{-1}(g))\subseteq\chi^{-1}(g')$, every $\omega\in\theta^{-1}(\omega')$ is a prefix of $\omega'$ satisfying $|\omega'|-L_{g,g'}\leq |\omega|\leq |\omega'|$. Hence we have $\#\theta^{-1}(\omega')\leq L_{g,g'}+1$ for all $\omega'\in\chi^{-1}(g')$. Combining this observation with \eqref{eqn:prop:poincareseries:convergence:proof}, we have
        \begin{align*}
            Z(p|g)&=\sum_{\omega'\in\chi^{-1}(g')}\sum_{\omega\in\theta^{-1}(\omega')}\exp(-pS_{\omega}u)\\
            &\leq \exp(|p|\max\set{V_u,L_{g,g'}\|u\|_\infty})\sum_{\omega'\in\chi^{-1}(g')}\sum_{\omega\in\theta^{-1}(\omega')}\exp(-pS_{\theta(\omega)}u)\\
            &\leq \exp(|p|\max\set{V_u,L_{g,g'}\|u\|_\infty})(L_{g,g'}+1)\sum_{\omega'\in\chi^{-1}(g')}\exp(-pS_{\omega'}u)\\
            &=\exp(|p|\max\set{V_u,L_{g,g'}\|u\|_\infty})(L_{g,g'}+1)Z(p|g').
        \end{align*}
        Therefore, $Z(p|g)<+\infty$ if $Z(p|g')<+\infty$. By symmetry, we also have the converse. Thus, the first claim in Proposition \ref{prop:delta} holds. The second claim follows immediately from the first claim.
    \end{proof}
    
    \begin{prop} \label{prop:delta:+}
        $\poincareexp(\identity_\fgfuchs)>0$.
    \end{prop}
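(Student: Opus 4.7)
The strategy is to construct two distinct identity-projecting loops $\pi_1, \pi_2 \in \chi^{-1}(\identity_\fgfuchs)$ whose admissible concatenations yield exponentially many distinct elements of $\chi^{-1}(\identity_\fgfuchs)$, which forces $Z(p|\identity_\fgfuchs)$ to diverge for small $p > 0$. The first task is to find a letter $s \in \alphabet$ admitting two distinct out-neighbors $s_1 \neq s_2$, i.e.\ $\incmat(s, s_1) = \incmat(s, s_2) = 1$. If no such $s$ existed, every vertex in the incidence graph would have a unique outgoing edge, so topological transitivity would reduce the SFT to a single periodic orbit $a_0 \cdots a_{L-1}$; but then $\chi(\shiftsp^*)$ would be contained in finitely many cosets of the cyclic subgroup $\langle c \rangle \leq \fgfuchs$ with $c := \chi(a_0 \cdots a_{L-1})$, which has at most linear growth and so cannot equal the exponentially growing non-cyclic free group $\fgfuchs$, contradicting surjectivity of $\chi$. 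I also fix any $e \in \alphabet$ with $\incmat(e, s) = 1$, which exists by topological transitivity.

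For each $i \in \{1, 2\}$ I build $\pi_i = s \, s_i \, \mu_i \, e \in \chi^{-1}(\identity_\fgfuchs)$ as follows. Surjectivity of $\chi$ yields some $\mu_i' \in \chi^{-1}\bigl(\chi(s_i)^{-1}\chi(s)^{-1}\chi(e)^{-1}\bigr)$, and transitivity of $\chi^{-1}(\identity_\fgfuchs)$ supplies identity-projecting fillers to prepend and append to $\mu_i'$ so that the resulting $\pi_i$ is admissible, begins with $s s_i$, and ends with $e$. By construction $\pi_1$ and $\pi_2$ disagree at position $1$, so neither is a prefix of the other.

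For any $k \geq 1$ and any $(i_1, \ldots, i_k) \in \{1, 2\}^k$, the concatenation $\pi_{i_1} \cdots \pi_{i_k}$ is admissible (every internal junction is the admissible pair $es$) and projects to $\identity_\fgfuchs$, and distinct sequences give distinct words via the unambiguous parser ``read position $1$ of the current block to determine $i_j$, peel off $\pi_{i_j}$ whose length is now fixed, and recurse.'' Setting $L := \max(|\pi_1|, |\pi_2|)$, this exhibits at least $2^k$ pairwise distinct words of length $\leq kL$ in $\chi^{-1}(\identity_\fgfuchs)$, so using the trivial bound $S_\omega u \leq \|u\|_\infty |\omega|$,
\begin{equation*}
    Z(p|\identity_\fgfuchs) \geq \sum_{k \geq 1} 2^k \exp\bigl(-p k L \|u\|_\infty\bigr),
\end{equation*}
which diverges for every $p < (\log 2)/(L \|u\|_\infty)$, giving $\poincareexp(\identity_\fgfuchs) \geq (\log 2)/(L \|u\|_\infty) > 0$.

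The main obstacle is the first step: carefully ruling out the degenerate single-cycle case to guarantee the branching vertex $s$ and hence two genuinely distinguishable loops. Once this branching is in hand, the construction of $\pi_1$ and $\pi_2$ from surjectivity and transitivity is routine, and the block-parsing argument is purely combinatorial.
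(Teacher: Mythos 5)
Your argument is correct, and it takes a genuinely different route from the paper. The paper's proof pads every single letter $a\in\alphabet$ into a block $a\rho(a)a$ with $\rho(a)\in\chi^{-1}(\chi(a)^{-2})$, thereby defining an injective substitution $\iota:\shiftsp^*\setminus\set{\epsilon}\rightarrow\chi^{-1}(\identity_\fgfuchs)$ with linearly comparable lengths, and then reduces positivity of $\poincareexp(\identity_\fgfuchs)$ to the divergence of $\sum_{\omega\in\shiftsp^*}\exp(-p_0|\omega|)$ for some $p_0>0$, which it obtains by pulling back the exponential growth of the non-cyclic free group $\fgfuchs$ through bounded-length preimages of its generators; in effect the non-cyclicity of $\fgfuchs$ is used quantitatively, and the lower bound on $\poincareexp(\identity_\fgfuchs)$ reflects the growth rate of $\shiftsp^*$. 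You instead locate a single branching letter $s$ of the incidence graph, build from it (using surjectivity of $\chi$ and transitivity of $\chi^{-1}(\identity_\fgfuchs)$, with admissibility checked only at junctions since the SFT condition is nearest-neighbor) two identity-projecting loops $\pi_1,\pi_2$ that are freely concatenable and distinguishable at their second symbol, and the resulting uniquely parsable binary tree inside $\chi^{-1}(\identity_\fgfuchs)$ gives the explicit bound $\poincareexp(\identity_\fgfuchs)\geq(\log 2)/(L\|u\|_\infty)$; non-cyclicity of $\fgfuchs$ enters only to exclude the degenerate single-cycle SFT. Both proofs are sound; yours is more self-contained at the symbolic level and yields an explicit constant depending only on the two loops, while the paper's yields a bound tied to the growth of $\shiftsp^*$ and avoids any case analysis on the graph. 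One small point to tighten in your degenerate case: if the SFT is a single cycle, $\chi(\shiftsp^*)$ is contained in finitely many cosets of cyclic subgroups that are \emph{conjugates} of $\langle c\rangle$ (sets of the form $g\langle c\rangle h$), not of $\langle c\rangle$ itself; this does not affect your conclusion, since such a set still has at most linear growth in $\fgfuchs$ and therefore cannot exhaust a non-cyclic free group, contradicting surjectivity exactly as you say. You should also note explicitly that $\mu_i'$ may be empty, in which case a single transitivity filler joining $s_i$ to $e$ suffices.
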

    \begin{proof}
        Due to the surjectivity of $\chi$ and the transitivity of $\chi^{-1}(\identity_\fgfuchs)$, every $a\in\alphabet$ admits the existence of $\rho(a)\in\chi^{-1}(\chi(a)^{-2})$ satisfying that $a\rho(a)a\in\shiftsp^*$. Set $\tilde{A}:=\set{a\rho(a)a|a\in\alphabet}$. It is straightforward from its definition that $\tilde{A}$ is a finite subset of $\chi^{-1}(\identity_\fgfuchs)$. Define
        \begin{align*}
            \tilde{A}^*&:=\bigcup_{m=1}^\infty\set{\tau^{(0)}\cdots\tau^{(m-1)}\in\shiftsp^*|\tau^{(j)}\in\tilde{A},\,\forall j\in\set{0,\cdots,m-1}}\subseteq\chi^{-1}(\identity_\fgfuchs),
        \end{align*}
        and also define a map $\iota:\shiftsp^*\setminus\set{\epsilon}\rightarrow\tilde{A}^*$ by setting $\iota(\omega)$ as the unique word $\tau^{(0)}\cdots\tau^{(|\omega|-1)}\in\tilde{A}^*$ satisfying that $[\tau^{(j)}]\subseteq[\omega_j]$ for all $j\in\set{0,\cdots,\left|\omega\right|-1}$. Indeed, we can write
        \begin{equation*}
            \iota(\omega)=\omega_0\rho(\omega_0)\omega_0\omega_1\rho(\omega_1)\omega_1
            \cdots\omega_{|\omega|-1}\rho(\omega_{|\omega|-1})\omega_{|\omega|-1},
        \end{equation*}
        for all $\omega=\omega_0\cdots\omega_{|\omega|-1}\in\shiftsp^*\setminus\set{\epsilon}$. Observe that $\iota(\omega\omega')=\iota(\omega)\iota(\omega')$ for any two $\omega,\omega'\in\shiftsp^*$ satisfying that $\omega\omega'\in\shiftsp^*$. From this observation, we can find a way to recover $\omega$ from a given $\iota(\omega)$ as follows. Note that by our construction, we have $\omega_0=\iota(\omega)_0$. Since $\iota(\omega)=\iota(\omega_0)\iota(\omega_1\cdots\omega_{|\omega|-1})$, we discard $\iota(\omega_0)=\omega_0\rho(\omega_0)\omega_0$ from the beginning of $\iota(\omega)$, then we have $\omega(\omega_1\cdots\omega_{|\omega|-1})$ whose first symbol is precisely $\omega_1$. Repeating this process, we can find out all $\omega_0,\omega_1,\cdots,\omega_{|\omega|-1}$. This shows that $\iota$ is injective. Therefore, for all $p\in\bbR$, we have
        \begin{equation*}
            Z(p|\identity_\fgfuchs) \geq \sum_{\omega\in\tilde{A}^*}\exp(-pS_\omega u)\geq\sum_{\omega\in\shiftsp^*}\exp(-pS_{\iota(\omega)} u).
        \end{equation*}
        By the definition of $\iota(\omega)$, we clearly have $|\omega|\leq |\iota(\omega)|\leq |\omega|\cdot\max_{\tau\in\tilde{A}}|\tau|$. Hence, in order to show Proposition \ref{prop:delta:+}, one only needs to show the existence of $p_0>0$ such that
        \begin{equation*}
            \sum_{\omega\in\shiftsp^*}\exp(-p_0|\omega|)=+\infty.
        \end{equation*}
        
        Since the SFT $\leftshift_\fgfuchs:\partial\fgfuchs\rightarrow\partial\fgfuchs$ given by the alphabet $\fgfuchs_0$ and the incidence matrix $\incmat_\fgfuchs$ we described in Subsection~\ref{subsec:freegroups} is clearly topologically mixing, there is some $p_\fgfuchs>0$ such that
        \begin{equation*}
            \sum_{g\in\fgfuchs} \exp(-p_\fgfuchs|g|)=+\infty.
        \end{equation*}
        Let $l_\chi:=\max_{a\in\alphabet}\max_{e\in\fgfuchs_0}\min_{\omega\in[a]\cap\chi^{-1}(e)}|\omega|$. Then, for any $g\in\fgfuchs\setminus\set{\identity_\fgfuchs}$, there is some word $\omega\in\chi^{-1}(g)$ such that $|\omega|\leq l_\chi |g|$. Thus, we have
        \begin{equation*}
            \sum_{\omega\in\shiftsp^*} \exp(-\frac{p_\fgfuchs}{l_\chi}|\omega|)\geq \sum_{g\in\fgfuchs} \exp(-p_\fgfuchs|g|)=+\infty.
        \end{equation*}
        Since $p_\fgfuchs/l_\chi$ is clearly positive, the proof is complete.
    \end{proof}

    We next show that the restricted Poincar\'e series is almost supermultiplicative in the following sense.
    \begin{prop} \label{prop:poincareseries:supadd}
        For any $p\in\bbR$, there exists some $C_+(p)\in(1,+\infty)$ such that
        \begin{equation} \label{eqn:prop:poincareseries:supadd}
            Z(p|g)Z(p|g')\leq C_+(p)Z(p|gg'),\,\forall g,g'\in\fgfuchs.
        \end{equation}
    \end{prop}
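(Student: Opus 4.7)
The strategy is to construct an almost-injective ``gluing'' map from pairs $(\omega,\omega')\in\chi^{-1}(g)\times\chi^{-1}(g')$ into $\chi^{-1}(gg')$ which distorts Birkhoff sums by a uniformly bounded amount. The naive construction $(\omega,\omega')\mapsto\omega\rho\omega'$, with $\rho$ a short connector drawn from a finite transitive subset $\mathcal{I}\subseteq\chi^{-1}(\identity_\fgfuchs)$ as in the proof of Proposition \ref{prop:unifirred}, fails to be bounded-to-one because a word $\tilde\omega\in\chi^{-1}(gg')$ may have arbitrarily many prefixes projecting to $g$. My remedy is to restrict one coordinate to ``first-visit'' representatives and then to compensate via a loop factor coming from $Z(p|\identity_\fgfuchs)$. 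The trivial cases $g=\identity_\fgfuchs$ or $g'=\identity_\fgfuchs$ will be handled separately, since the claim then reduces to the boundedness of $Z(p|\identity_\fgfuchs)$, which is automatic for $p>\poincareexp(\identity_\fgfuchs)$ by Proposition \ref{prop:delta}.

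Let $\chi^{-1}_{\min}(g):=\{\omega\in\chi^{-1}(g):\chi(\omega_0\cdots\omega_{k-1})\neq g\text{ for all }0\le k<|\omega|\}$ and $\hat Z(p|g):=\sum_{\omega\in\chi^{-1}_{\min}(g)}\exp(-pS_\omega u)$. For $g\neq\identity_\fgfuchs$, every $\omega\in\chi^{-1}(g)$ factors uniquely as $\omega=\omega_{\min}\tau$, where $\omega_{\min}\in\chi^{-1}_{\min}(g)$ is the shortest prefix projecting to $g$ and $\tau\in\chi^{-1}(\identity_\fgfuchs)$. The bounded distortion of $u$ gives $|S_\omega u-S_{\omega_{\min}}u-S_\tau u|\le 2V_u$, and summing over admissible tails yields
\[
Z(p|g)\le \exp(2|p|V_u)\,Z(p|\identity_\fgfuchs)\,\hat Z(p|g).
\]

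Next, for each $(a,a')\in\alphabet\times\alphabet$ fix $\rho(a,a')\in\mathcal{I}$ with $a\rho(a,a')a'\in\shiftsp^*$, and set $L:=\max_{a,a'}|\rho(a,a')|$. Define $\Phi:\chi^{-1}_{\min}(g)\times\chi^{-1}(g')\to\chi^{-1}(gg')$ by $\Phi(\omega,\omega'):=\omega\,\rho(\omega_{|\omega|-1},\omega'_0)\,\omega'$, with the obvious convention when $\omega'$ is empty. The key observation is that for every $\tilde\omega\in\chi^{-1}(gg')$ the preimage $\Phi^{-1}(\tilde\omega)$ has at most $|\alphabet|$ elements: $\omega$ is forced to be the unique shortest prefix of $\tilde\omega$ projecting to $g$, and given $\omega$ the only remaining freedom is the choice of the first letter of $\omega'\in\alphabet$. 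Combined with the bounded distortion estimate $|S_{\Phi(\omega,\omega')}u-S_\omega u-S_{\omega'}u|\le L\|u\|_\infty+2V_u$ (valid since $|\rho|\le L$ and $u>0$), summation over pairs yields
\[
\hat Z(p|g)\,Z(p|g')\le |\alphabet|\exp(|p|(L\|u\|_\infty+2V_u))\,Z(p|gg').
\]

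Multiplying the two displayed inequalities gives the claim with $C_+(p):=|\alphabet|\,Z(p|\identity_\fgfuchs)\exp(|p|(L\|u\|_\infty+4V_u))$, which is finite for $p>\poincareexp(\identity_\fgfuchs)$ by Proposition \ref{prop:delta}; for $p\le\poincareexp(\identity_\fgfuchs)$ that proposition makes both sides of the target inequality infinite, so the statement is vacuous there (one may take $C_+(p)=2$ in that range). The main obstacle is precisely the unbounded multiplicity of the naive gluing; the crucial idea is to impose the first-visit restriction on one coordinate, paying a controlled loop factor $Z(p|\identity_\fgfuchs)$ that is $g,g'$-independent and can therefore be absorbed into $C_+(p)$.
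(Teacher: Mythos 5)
Your argument is correct and is essentially the paper's proof in lightly reorganized form: both split $\omega$ at its shortest prefix projecting to $g$, glue that prefix to $\omega'$ through a connector taken from a finite transitive subset of $\chi^{-1}(\identity_\fgfuchs)$, bound the multiplicity of the gluing map, and absorb a factor $Z(p|\identity_\fgfuchs)$ (the discarded loop) into $C_+(p)$. The only blemish is the case split, which should be on whether $Z(p|\identity_\fgfuchs)$ is finite (by Proposition \ref{prop:delta} the inequality is trivial when it is infinite) rather than on $p\le\poincareexp(\identity_\fgfuchs)$, since at $p=\poincareexp(\identity_\fgfuchs)$ the series may converge; in that event your main construction applies verbatim, so the gap is purely cosmetic.
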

    
    \begin{proof}
        For $p\leq 0$, we have seen in Proposition \ref{prop:delta} that $Z(p|g)=+\infty$ for all $g\in\fgfuchs$. In this case, our claim holds trivially. Henceforth, we assume that $p>0$. We also fix a finite transitive subset $\mathcal{I}$ of $\chi^{-1}(\identity_\fgfuchs)$.
        
        Given any $g,g'\in\fgfuchs$, we define a map $\theta:\chi^{-1}(g)\times\chi^{-1}(g')\rightarrow\chi^{-1}(gg')\times\chi^{-1}(\identity_\fgfuchs)$ as follows. For any $(\omega,\omega')\in\chi^{-1}(g)\times\chi^{-1}(g')$, take the shortest prefix $\omega^{(0)}$ of $\omega$ such that $\chi(\omega^{(0)})=g$. Let $\omega=\omega^{(0)}\omega^{(1)}$. Then, $\chi(\omega^{(1)})=\identity_\fgfuchs$. Then, we pick some $\rho\in\mathcal{I}$ such that $\omega^{(0)}\rho\omega'\in\shiftsp^*$. We set
        \begin{equation*}
            \theta(\omega,\omega'):=(\omega^{(0)}\rho\omega',\omega^{(1)})\in \chi^{-1}(gg')\times\chi^{-1}(\identity_\fgfuchs).
        \end{equation*}
        For later convenience, the two components of $\theta(\omega,\omega')$ will be denoted by $\theta_1(\omega,\omega')=\omega^{(0)}\rho\omega'$ and $\theta_2(\omega,\omega')=\omega^{(1)}$. Let $L_{\mathcal{I}}:=\max_{\rho'\in\mathcal{I}}|\rho'|$, and observe that
        \begin{align*}
            \sum_{j=1}^2S_{\theta_j(\omega,\omega')}u\leq S_{\omega}u+S_{\omega'}u+L_{\mathcal{I}}\|u\|_\infty+V_u.
        \end{align*}

        Note that given $\theta(\omega,\omega')=(\theta_1(\omega,\omega'),\theta_2(\omega,\omega'))$, we can recover $\omega$ as follows. Let $\omega^{(0)}$ be the shortest prefix of $\theta_1(\omega,\omega')$ such that $\chi(\omega^{(0)})=g$. It is easy to see from our construction that $\omega$ is bound to be $\omega^{(0)}\theta_2(\omega,\omega')$. Since $\omega^{(0)}$ is unique and the joining block $\rho$ in the middle has its length at most $L_{\mathcal{I}}$, the number of suffixes of $\theta_1(\omega,\omega')$ which can possibly be $\omega'$ cannot exceed $L_{\mathcal{I}}+1$. Therefore, we have
        \begin{equation*}
            \#\theta^{-1}(\omega'',\omega''')\leq L_{\mathcal{I}}+1,
        \end{equation*}
        for all $(\omega'',\omega''')\in \chi^{-1}(gg')\times\chi^{-1}(\identity_\fgfuchs)$.

        Using the facts we have so far obtained, we derive that for $p>0$,
        \begin{align*}
            Z(p|g)Z(p|g')&=\sum_{(\omega,\omega')\in\chi^{-1}(g)\times\chi^{-1}(g')} \exp(-p(S_\omega u+S_{\omega'}u))\\
            &\leq \frac{\tilde{C}_+(p)}{L_{\mathcal{I}}+1}\sum_{(\omega,\omega')\in\chi^{-1}(g)\times\chi^{-1}(g')}\exp(-p\sum_{j=1}^2S_{\theta_j(\omega,\omega')}u)\\
            &\leq \tilde{C}_+(p)Z(p|gg')Z(p|\identity_\fgfuchs),
        \end{align*}
        where
        \begin{equation*}
            \tilde{C}_+(p):=(L_{\mathcal{I}}+1)\exp(pL_{\mathcal{I}}\sup(u(\shiftsp_0))+pV_u).
        \end{equation*}
        When $Z(p|g)=+\infty$ for all $g\in\fgfuchs$, our claim in Proposition \ref{prop:poincareseries:supadd} clearly holds. Otherwise, by Proposition \ref{prop:delta}, we have $Z(p|g)<+\infty$ for all $g\in\fgfuchs$, which in particular gives $Z(p|\identity_\fgfuchs)<+\infty$. Therefore, setting $C_+(p):=\tilde{C}_+(p)Z(p|\identity_\fgfuchs)$, we have our desired claim.
    \end{proof}

    It is clear from the definition that $Z(p|g)$ decreases as $p>\poincareexp(\identity_\fgfuchs)$ increases for every $g\in\fgfuchs$. The next proposition shows that when $p$ increases, $Z(p|g)$ decreases at a higher speed, or increases at a lower speed, as the length $|g|$ of $g$ tends to infinity.
    \begin{prop} \label{prop:decrateZ}
        Let $p'>p>\poincareexp(\identity_\fgfuchs)$. Then, we have
        \begin{equation} \label{eqn:prop:decrateZ}
            \sum_{m=1}^\infty\frac{Z(p'|x_0\cdots x_{m-1})}{Z(p|x_0\cdots x_{m-1})}<+\infty,\, \forall x\in\partial\fgfuchs.
        \end{equation}
    \end{prop}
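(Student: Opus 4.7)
The plan is to prove this by a direct term-by-term comparison of $Z(p'|g)$ against $Z(p|g)$, exploiting the fact that long words in $\fgfuchs$ force long $\chi$-preimages, which in turn force large Birkhoff sums of $u$.

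Concretely, I would first set $M:=\max_{a\in\alphabet}|\chi(a)|$, which is finite because $\alphabet$ is finite, and note that iterating the homomorphism identity $\chi(\omega\omega')=\chi(\omega)\chi(\omega')$ and using the triangle inequality for the word metric on $\fgfuchs$ yields $|\chi(\omega)|\leq M|\omega|$ for every $\omega\in\shiftsp^*$. Next, for $x\in\partial\fgfuchs$ and $m\geq 1$, the prefix $g_m:=x_0\cdots x_{m-1}$ is a reduced word over $\fgfuchs_0$, so $|g_m|=m$ as a group element. Combining these two observations, every $\omega\in\chi^{-1}(g_m)$ must satisfy $|\omega|\geq m/M$.

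I would then bring $u$ into the picture. Since $\shiftsp$ is compact and $u:\shiftsp\rightarrow(0,+\infty)$ is continuous, the constant $c_0:=\min_{\shiftsp} u$ is strictly positive. For any $\omega\in\shiftsp^*$ and any $\xi\in[\omega]$, we have $S_{|\omega|}u(\xi)\geq c_0|\omega|$, and hence $S_\omega u=\sup_{\xi\in[\omega]}S_{|\omega|}u(\xi)\geq c_0|\omega|$. Applied to $\omega\in\chi^{-1}(g_m)$, this gives $S_\omega u\geq c_0 m/M$. Factoring $\exp(-p'S_\omega u)=\exp(-(p'-p)S_\omega u)\exp(-pS_\omega u)$ and using $p'-p>0$, we obtain
\begin{equation*}
    Z(p'|g_m)=\sum_{\omega\in\chi^{-1}(g_m)}e^{-p'S_\omega u}\leq e^{-(p'-p)c_0 m/M}\sum_{\omega\in\chi^{-1}(g_m)}e^{-pS_\omega u}=e^{-(p'-p)c_0 m/M}Z(p|g_m).
\end{equation*}
Here the hypothesis $p>\poincareexp(\identity_\fgfuchs)$ guarantees, via Proposition~\ref{prop:delta}, that $Z(p|g_m)\in(0,+\infty)$, so the ratio is well defined.

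Dividing and summing, I arrive at the uniform estimate
\begin{equation*}
    \sum_{m=1}^\infty\frac{Z(p'|x_0\cdots x_{m-1})}{Z(p|x_0\cdots x_{m-1})}\leq \sum_{m=1}^\infty e^{-(p'-p)c_0 m/M}<+\infty,
\end{equation*}
which is a convergent geometric series because $(p'-p)c_0/M>0$. The bound is uniform in $x\in\partial\fgfuchs$, which is even stronger than what is claimed. There is no real obstacle here; the only point one must be careful about is extracting the positive lower bound $c_0$ on $u$ (which uses compactness of $\shiftsp$) and verifying that the homomorphism property of $\chi$ gives the linear bound $|\chi(\omega)|\leq M|\omega|$ in the free group word length.
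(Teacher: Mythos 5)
Your proposal is correct and follows essentially the same route as the paper: the paper also bounds $|\chi(\omega)|\leq\lambda_1(\chi)|\omega|$ with $\lambda_1(\chi)=\max_{a\in\alphabet}|\chi(a)|$, deduces $S_\omega u\geq|\omega|\min u\geq m\min u/\lambda_1(\chi)$ for $\omega\in\chi^{-1}(x_0\cdots x_{m-1})$, and sums the resulting geometric series. The only cosmetic difference is that the paper carries an extra (unnecessary) factor $\exp((p'-p)V_u)$, whereas your argument dispenses with it.
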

    \begin{proof}
        Fix $p'>p>\poincareexp(\identity_\fgfuchs)$. Observe that for any $\omega\in\shiftsp^*$, we have
        \begin{equation*}
            |\chi(\omega)|\leq\sum_{j=0}^{|\omega|-1}|\chi(\omega_j)|\leq |\omega|\lambda_1(\chi),
        \end{equation*}
        where $\lambda_1(\chi) :=\max_{a\in\alphabet}|\chi(a)|<+\infty$. Since $\chi$ is surjective, we also have $\lambda_1(\chi)\geq 1$. Then, for any $g\in\fgfuchs$, we have
        \begin{align*}
            Z(p'|g)&\leq \sum_{\omega\in\chi^{-1}(g)}\exp((p-p')S_\omega u+(p'-p)V_u)\exp(-pS_\omega u)\\
            &\leq \sum_{\omega\in\chi^{-1}(g)}\exp((p-p')|\omega|\min_{\xi\in\shiftsp} u(\xi)+(p'-p)V_u)\exp(-pS_\omega u)\\
            &\leq \exp((p'-p)V_u+(p-p')|g|\min_{\xi\in\shiftsp} u(\xi)/\lambda_1(\chi))Z(p|g).
        \end{align*}
        Hence, for all $x\in\partial\fgfuchs$, we have
        \begin{align*}
            \sum_{m=1}^\infty\frac{Z(p'|x_0\cdots x_{m-1})}{Z(p|x_0\cdots x_{m-1})}\leq \exp((p'-p)V_u)\sum_{m=1}^\infty \exp((p-p')m\min_{\xi\in\shiftsp} u(\xi)/\lambda_1(\chi)).
        \end{align*}
        Since $(p-p')\min_{\xi\in\shiftsp} u(\xi)/\lambda_1(\chi)<0$, we can conclude that \eqref{eqn:prop:decrateZ} holds.
    \end{proof}

    As a corollary, we have the following fact.
    \begin{cor} \label{cor:Zgeodseries:conv:symmetric}
        Suppose that, for some involution $\cdot^\dagger:\shiftsp^*\rightarrow\shiftsp^*$, the $\fgfuchs$-extension of $\leftshift:\shiftsp\rightarrow\shiftsp$ given by $\chi$ and $u:\shiftsp\rightarrow(0,+\infty)$ are both symmetric with respect to $\cdot^\dagger$. Then, $\sum_{m=1}^\infty Z(p|x_0\cdots x_{m-1})<+\infty$ for all $x\in\partial\fgfuchs$ and all $p>\poincareexp(\identity_\fgfuchs)$.
    \end{cor}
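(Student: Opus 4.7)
The plan is to combine three already-established ingredients: a term-by-term comparison $Z(p|g) \asymp Z(p|g^{-1})$ furnished by the symmetry hypotheses, the almost supermultiplicativity from Proposition \ref{prop:poincareseries:supadd}, and the decay ratio estimate from Proposition \ref{prop:decrateZ}. Fix $p > \poincareexp(\identity_\fgfuchs)$. I would first prove the $g$-uniform bound $M(p) := \sup_{g \in \fgfuchs} Z(p|g) < +\infty$, and then interpolate with an intermediate exponent $p'$ to extract summability along the prefixes of $x$.

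For the uniform bound, the involution $\cdot^\dagger$ together with the second symmetry condition $\chi(\omega^\dagger) = \chi(\omega)^{-1}$ restricts to a bijection $\chi^{-1}(g) \to \chi^{-1}(g^{-1})$. The symmetry hypothesis \eqref{eqn:distconst:dagger} on $u$ then gives, term by term,
\begin{equation*}
Z(p|g^{-1}) \;=\; \sum_{\omega \in \chi^{-1}(g)} \exp(-p\,S_{\omega^\dagger} u) \;\geq\; e^{-p V^\dagger_u}\, Z(p|g).
\end{equation*}
Applying Proposition \ref{prop:poincareseries:supadd} to the pair $(g, g^{-1})$, and noting that $Z(p|\identity_\fgfuchs) < +\infty$ by Proposition \ref{prop:delta}, one obtains
\begin{equation*}
Z(p|g)^2 \;\leq\; e^{p V^\dagger_u}\, Z(p|g)\, Z(p|g^{-1}) \;\leq\; e^{p V^\dagger_u}\, C_+(p)\, Z(p|\identity_\fgfuchs),
\end{equation*}
whose right-hand side is a finite constant independent of $g$; this yields $M(p) < +\infty$.

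With this uniform bound in hand, choose any $p' \in (\poincareexp(\identity_\fgfuchs), p)$. The same argument at exponent $p'$ gives $M(p') < +\infty$, so for every $x \in \partial\fgfuchs$,
\begin{equation*}
\sum_{m=1}^\infty Z(p|x_0\cdots x_{m-1}) \;\leq\; M(p') \sum_{m=1}^\infty \frac{Z(p|x_0\cdots x_{m-1})}{Z(p'|x_0\cdots x_{m-1})},
\end{equation*}
and Proposition \ref{prop:decrateZ}, applied with its larger exponent taken to be $p$ and its smaller to be $p'$, makes the right-hand sum finite. The only genuinely new step is the $g$-uniform bound on $Z(p|g)$, and this is precisely where both symmetry hypotheses are used; beyond careful bookkeeping of the constants, I do not anticipate any real obstacle.
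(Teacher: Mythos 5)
Your proposal is correct and takes essentially the same route as the paper's own proof: the involution gives the term-by-term comparison $Z(p'|g^{-1})\geq e^{-p'V^\dagger_u}Z(p'|g)$, which together with Proposition \ref{prop:poincareseries:supadd} yields the uniform bound $\sup_{g\in\fgfuchs}Z(p'|g)<+\infty$, and Proposition \ref{prop:decrateZ} then gives the summability along the prefixes of $x$. The only immaterial difference is that you also record the uniform bound at the exponent $p$ itself, which is not needed.
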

%
%
%
    
    \begin{proof}
        Fix an arbitrary $p>\poincareexp(\identity_\fgfuchs)$ and take  $p'\in(\poincareexp(\identity_\fgfuchs),p)$. Then, by the symmetry of $\fgfuchs$-extension with respect to $\cdot^\dagger$, we have for any $h\in\fgfuchs$,
        \begin{align*}
            Z(p'|h^{-1})&=\sum_{\omega\in\chi^{-1}(h^{-1})}\exp(-p'S_\omega u)= \sum_{\omega\in\chi^{-1}(h)}\exp(-p'S_{\omega^\dagger}u)\\
            &\leq\exp(p'V^\dagger_u)\sum_{\omega\in\chi^{-1}(h)}\exp(-p'S_{\omega}u)=\exp(p'V^\dagger_u) Z(p'|h),
        \end{align*}
        where $V^\dagger_u$ is defined as in \eqref{eqn:distconst:dagger}. Thus the symmetry of $u$ with respect to $\cdot^\dagger$ gives
        \begin{equation*} 
            \sup_{h\in\fgfuchs}\frac{Z(p'|h^{-1})}{Z(p'|h)}\leq \exp(p'V^\dagger_u)<+\infty.
        \end{equation*}
        
        For any $g\in\fgfuchs$, the almost supermultiplicativity of the restricted Poincar\'e series stated in Proposition \ref{prop:poincareseries:supadd} yields
        \begin{equation*}
            Z(p'|g)Z(p'|g^{-1})\leq C_+(p')Z(p'|\identity_\fgfuchs),
        \end{equation*}
        which further gives
        \begin{equation*}
            Z(p'|g)^2\leq Z(p'|g)Z(p'|g^{-1}) \sup_{h\in\fgfuchs}\frac{Z(p'|h^{-1})}{Z(p'|h)}\leq C_+(p')Z(p'|\identity_\fgfuchs)\exp(p'V^\dagger_u).
        \end{equation*}
        Consequently, we have $\sup_{g\in\fgfuchs}Z(p'|g)<+\infty$. Combining this with Proposition \ref{prop:decrateZ} and the fact that $p>p'>\poincareexp(\identity_\fgfuchs)$, we have
        \begin{align*}
            \sum_{m=1}^\infty Z(p|x_0\cdots x_{m-1})&=\sum_{m=1}^\infty\frac{Z(p|x_0\cdots x_{m-1})}{Z(p'|x_0\cdots x_{m-1})} Z(p'|x_0\cdots x_{m-1})\\
            &\leq\sum_{m=1}^\infty\frac{Z(p|x_0\cdots x_{m-1})}{Z(p'|x_0\cdots x_{m-1})} \sup_{g\in\fgfuchs}Z(p'|g)<+\infty,
        \end{align*}
        for all $x\in\partial\fgfuchs$. The proof is thus complete.
    \end{proof}



%

\section{Proofs of Theorem \ref{mainthm:dimspec:general} and Theorem \ref{mainthm:dimspec:symmetric}} \label{sec:mainproof}
    We prove Theorem \ref{mainthm:dimspec:general} and Theorem \ref{mainthm:dimspec:symmetric} in this section. We fix a topologically transitive SFT $\leftshift:\shiftsp\rightarrow\shiftsp$ given by some alphabet $\alphabet$ and some incidence matrix $\incmat$, a finitely generated free group $\fgfuchs$ and a projection $\chi:\shiftsp^*\rightarrow\fgfuchs$ for which $\chi^{-1}(\identity_\fgfuchs)$ is transitive.

\subsection{Proof of Theorem \ref{mainthm:dimspec:general}}
    First we prove Theorem \ref{mainthm:dimspec:general}. The argument we shall use is inspired by the proofs of \cite[Theorem 1.5]{gjk22} and \cite[Theorem 1.2]{liu}.

    The following lemma plays a role similar to \cite[Lemma 3.1]{liu}. We postpone its proof to the appendix.
    \begin{lma} \label{lma:disjcylinders}
        There is a finite transitive set $\mathcal{I}\subseteq\chi^{-1}(\identity_\fgfuchs)$ satisfying that $[\rho]\cap[\rho']=\varnothing$ for any two distinct $\rho,\rho'\in\mathcal{I}$.
    \end{lma}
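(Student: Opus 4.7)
\textbf{Proof proposal for Lemma \ref{lma:disjcylinders}.}

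The plan is to build $\mathcal{I}$ in two stages. Since $\chi^{-1}(\identity_\fgfuchs)$ is transitive by hypothesis and $\alphabet$ is finite, I would begin by fixing a finite transitive subset $\mathcal{I}_0 = \{\omega_{a, a'} : (a, a') \in \alphabet^2\} \subseteq \chi^{-1}(\identity_\fgfuchs)$ with $a \omega_{a, a'} a' \in \shiftsp^*$ for every pair. The issue is that the cylinders $\{[\omega_{a, a'}]\}$ need not be pairwise disjoint, since one word may be a prefix of another.

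To remedy this, I would replace each $\omega_{a, a'}$ by an extension $\tilde{\omega}_{a, a'} \in \chi^{-1}(\identity_\fgfuchs)$ of a common length $L$, preserving $a \tilde{\omega}_{a, a'} a' \in \shiftsp^*$. Distinct admissible words of the same length have disjoint cylinders, so $\mathcal{I} := \{\tilde{\omega}_{a, a'} : (a, a') \in \alphabet^2\}$ would then automatically be the desired set. The engine for producing extensions is a splice operation: given $\omega \in \chi^{-1}(\identity_\fgfuchs)$ witnessing some pair and containing an admissible transition $cd$, and given any bridging word $\tau \in \chi^{-1}(\identity_\fgfuchs)$ with $c \tau d \in \shiftsp^*$ (which exists by the transitivity of $\chi^{-1}(\identity_\fgfuchs)$), the substitution of $cd$ by $c \tau d$ inside $\omega$ yields a new word in $\chi^{-1}(\identity_\fgfuchs)$ of length $|\omega|+|\tau|$ still witnessing the same pair, since the homomorphy of $\chi$ keeps the $\chi$-value equal to $\identity_\fgfuchs$. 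Iterating with different bridges produces, for each pair, a large set of achievable lengths.

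The main obstacle I anticipate is verifying that the length sets $\mathcal{L}_{a, a'} := \{|\omega| : \omega \in \chi^{-1}(\identity_\fgfuchs),\, a \omega a' \in \shiftsp^*\}$ have a common element. A priori, periodicity of the SFT combined with the $\chi$-cocycle could conceivably force witnesses for different pairs into incompatible residue classes; however, the transitivity hypothesis on $\chi^{-1}(\identity_\fgfuchs)$ should rule out exactly such obstructions, since any genuine residue mismatch would prevent some pair from admitting any witness at all. With these obstructions absent, a Sylvester--Frobenius-type argument applied to the bridging lengths shows that each $\mathcal{L}_{a, a'}$ contains all sufficiently large integers in a common arithmetic progression, so a common $L$ exists. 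Should tracking the interaction between bridging lengths and residues prove too intricate, a viable fallback is to proceed inductively, processing the pairs one at a time and at each stage using the splice operation to lengthen the current candidate beyond any prefix collision with the finitely many already-chosen extensions; the unbounded extensibility afforded by splicing together with the finiteness of the obstruction set at each stage ensures that such a choice can always be made.
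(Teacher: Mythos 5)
There is a genuine gap, and it sits exactly at the step you flagged as the main obstacle. The paper assumes only that $\leftshift:\shiftsp\rightarrow\shiftsp$ is topologically transitive, not mixing, so the SFT may have period $p>1$: the alphabet then decomposes into cyclic classes with transitions only from class $i$ to class $i+1 \pmod p$, and for any $\omega$ with $a\omega a'\in\shiftsp^*$ the length $|\omega|$ is forced into a residue class mod $p$ determined by the classes of $a$ and $a'$. Consequently the length sets $\mathcal{L}_{a,a'}$ for different pairs can lie in disjoint residue classes even though every pair admits witnesses, so your claim that ``any genuine residue mismatch would prevent some pair from admitting any witness at all'' is false, and no common length $L$ need exist. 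For a concrete instance, take the bipartite SFT on the alphabet $\{(g,i): g\in\{\identity_\fgfuchs,e_1,e_1^{-1},e_2,e_2^{-1}\},\ i\in\{0,1\}\}$, where a transition $(g,i)\to(h,j)$ is admissible iff $j\equiv i+1\pmod 2$, and $\chi((g,i)):=g$. This system is topologically transitive, $\chi$ is a surjective projection, and $\chi^{-1}(\identity_\fgfuchs)$ is transitive (one- and two-letter identity words bridge every pair), yet every witness for a pair whose two letters have equal parity has odd length while every witness for a pair with unequal parities has even length. So the common-length strategy, and with it the Sylvester--Frobenius step, cannot be carried out under the stated hypotheses; it would require a mixing-type assumption the paper does not make.

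The fallback is also not a proof as it stands, because prefix collisions are two-sided. Making the new candidate longer than all previously chosen words only rules out the new word being a prefix of an old one; it does nothing to prevent an already-chosen word from being a prefix of the new candidate, and splicing gives no a priori control of the initial segment. For instance, if a letter $a$ has a unique admissible successor $b$ with $\chi(b)=\identity_\fgfuchs$ and the one-letter word $b$ was chosen at an earlier stage, then every witness for every pair with left letter $a$ has that chosen word as a prefix, and no amount of lengthening removes the collision; so the assertion that ``such a choice can always be made'' is precisely the point that requires proof. The paper closes this gap quantitatively: using $\poincareexp(\identity_\fgfuchs)>0$ (Proposition \ref{prop:delta:+}) it shows that $\#(\chi^{-1}(\identity_\fgfuchs)\cap\shiftsp^m)$ grows exponentially along a subsequence, hence for a suitable large $m$ the set of witnesses for each pair with length in a fixed window $[m,m+2L_0]$ has more elements than the number of words in that window which are prefix-comparable with any of the at most $\#\alphabet^2-1$ previously chosen words; a greedy choice inside this window then produces pairwise prefix-incomparable witnesses, i.e.\ disjoint cylinders. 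Your splice construction is sound as far as it goes (it does yield, for each pair, witnesses in $\chi^{-1}(\identity_\fgfuchs)$ of arbitrarily large length), but to finish you need either such a counting lower bound on the number of witnesses or some other mechanism controlling prefixes; the proposal currently supplies neither.
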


    \begin{proof} [Proof of Theorem \ref{mainthm:dimspec:general}]
        The fact that $\poincareexp(\identity_\fgfuchs)>0$ has been given in Proposition \ref{prop:delta:+}. Hence it only remains to show $\dim_u(\unifescset_x)\geq \poincareexp(\identity_\fgfuchs)$ for all $x\in\partial\fgfuchs$. Fix an arbitrary positive $p<\poincareexp(\identity_\fgfuchs)$ and an arbitrary $x\in\partial\fgfuchs$. We aim to prove that $\dim_u(\unifescset_x)\geq p$, and our proof will then be complete.

        Let the free group $\fgfuchs$ be freely generated by $\set{e_1,\cdots,e_n}\subseteq\fgfuchs$ and set $\fgfuchs_0:=\set{e_1,e_1^{-1},\cdots,e_n,e_n^{-1}}$. By Proposition \ref{prop:unifirred}, there are $2n$ non-empty words
        \begin{equation*}
            \tau^{(1)}\in\chi^{-1}(e_1),\,\tau^{(-1)}\in\chi^{-1}(e_1^{-1}),\,\cdots,\,\tau^{(n)}\in\chi^{-1}(e_n),\,\tau^{(-n)}\in\chi^{-1}(e_n^{-1})
        \end{equation*}
        of which the cylinder sets are pairwise disjoint. Take a finite transitive subset $\mathcal{I}$ of $\chi^{-1}(\identity_\fgfuchs)$ and a finite subset $\mathcal{W}$ of $\chi^{-1}(\identity_\fgfuchs)$ such that 
        \begin{equation} \label{eqn:prop:lobound:proof1}
            \log\sum_{\tau\in\mathcal{W}}\exp(-pS_\tau u)> p\|u\|_\infty\left(\max_{s\in\set{\pm 1}}\max_{j\in\set{1,\cdots,n}}|\tau^{(sj)}|+2\max_{\rho\in\mathcal{I}}|\rho|\right)+3pV_u,
        \end{equation}
        where $\|u\|_\infty:=\max_{\xi\in\shiftsp}u(\xi)$. Now for each $j\in\set{1,\cdots,n}$, each $s\in\set{\pm1}$ and each $a\in\alphabet$, we define
        \begin{equation*}
            \mathcal{A}(e_j^s,a):=\Set{\tau^{(sj)}\rho\tau\rho'\in\shiftsp^*|\rho,\rho'\in\mathcal{I},\,\tau\in\mathcal{W},\,\rho'a\in\shiftsp^*}\subseteq\chi^{-1}(e_j^s).
        \end{equation*}
        By \eqref{eqn:prop:lobound:proof1} and the transitivity of $\mathcal{I}$, we have
        \begin{equation} \label{eqn:prop:lobound:proof2}
            \sum_{\omega\in\mathcal{A}(e_j^s,a)}\exp(-pS_\omega u)\geq \exp(pV_u)
        \end{equation}
        holds for all $j\in\set{1,\cdots,n}$, $s\in\set{\pm1}$ and $a\in\alphabet$. Further set
        \begin{equation*}
            \mathcal{A}(e_j^s):=\bigcup_{a\in\alphabet}\mathcal{A}(g_j^s,a)
        \end{equation*}
        for each $j\in\set{1,\cdots,n}$ and $s\in\set{\pm1}$.

        We now define a Borel probability measure $\mu^p$ on $\shiftsp$ by induction as follows. First note that, for any $j\in\set{1,\cdots,n}$ and $s\in\set{\pm1}$, our construction of $\mathcal{A}(e^s_j)$ guarantees that all the elements of $\mathcal{A}(e^s_j)$ begin with the same symbol $\tau^{(sj)}_1$. From this observation, for every integer $k\geq 0$, we can let $a_k$ to be the first symbol of any of the elements of $\mathcal{A}(x_k)$. For each $\omega^{(0)}\in\mathcal{A}(x_0,a_1)$, we define
        \begin{equation*}
            \mu^p([\omega^{(0)}]) :=\frac{\exp(-pS_{\omega^{(0)}}u)}{\sum_{\omega\in\mathcal{A}(x_0,a_1)}\exp(-pS_\omega u)}.
        \end{equation*}
        For any integer $k\geq 1$ and any $\omega^{(0)}\cdots\omega^{(k)}\in\shiftsp^*$ satisfying that $\omega^{(j)}\in\mathcal{A}(x_j,a_{j+1})$ for all $j\in\set{0,\cdots,k}$, we define
        \begin{equation} \label{eqn:prop:lobound:proof3}
            \mu^p([\omega^{(0)}\cdots\omega^{(k)}]) :=\frac{\exp(-pS_{\omega^{(0)}\cdots\omega^{(k-1)}\omega^{(k)}}u)\mu^p([\omega^{(0)}\cdots\omega^{(k-1)}])}{\sum_{\omega\in\mathcal{A}(x_k,a_{k+1})}\exp(-pS_{\omega^{(0)}\cdots\omega^{(k-1)}\omega}u)}.
        \end{equation}
        Lemma \ref{lma:disjcylinders} implies that $[\omega^{(0)}\cdots\omega^{(k-1)}\omega^{(k)}]\cap[\omega^{(0)}\cdots\omega^{(k-1)}\tau^{(k)}]=\varnothing$ if $\omega^{(k)},\tau^{(k)}\in\mathcal{A}(x_k,a_{k+1})$ are distinct. Thus, by Kolmogorov's consistency theorem, $\mu^p$ is well-defined.
        
        It follows from our construction of $\mu^p$ that the support of $\mu^p$ is contained in the compact set
        \begin{equation*}
            K:=\bigcap_{k=1}^\infty \bigcup_{\omega^{(0)}\in\mathcal{A}(x_0,a_1),\cdots,\omega^{(k-1)}\in\mathcal{A}(x_{k-1},a_k)}[\omega^{(0)}\cdots\omega^{(k-1)}].
        \end{equation*}
        Clearly, every $\xi\in K$ can be written as $\omega^{(0)}\omega^{(1)}\cdots$, where $\omega^{(k)}\in\mathcal{A}(x_k,a_{k+1})$ for every integer $k\geq 0$. Observe that
        \begin{equation*}
            \max_{k\geq 0}|\omega^{(k)}|<+\infty
        \end{equation*}
        because $\bigcup_{j\in\set{1,\cdots,n}}\bigcup_{s\in\set{\pm 1}}\mathcal{A}(e^s_j)$ is a finite set.
        Hence, for each integer $m\geq 1$, let $k(m)$ be the least non-negative integer for which $\xi_0\cdots\xi_{m-1}$ is a prefix of $\omega^{(0)}\cdots\omega^{k(m)}$, and it is not hard to see that
        \begin{equation*}
            \sup_{m\geq 1}\big||\chi(\xi_0\cdots\xi_{m-1})|-k(m)\big|\leq \sup_{m\geq 1}\big||\chi(\xi_0\cdots\xi_{m-1})|-|\chi(\omega^{(0)}\cdots\omega^{k(m)})|\big|<+\infty,
        \end{equation*}
        and
        \begin{align*}
            &\quad\sup_{m\geq 1}\big||\chi(\xi_0\cdots\xi_{m-1})\wedge x|-k(m)\big|\\
            &\leq \sup_{m\geq 1}\big||\chi(\xi_0\cdots\xi_{m-1})\wedge x|-|\chi(\omega^{(0)}\cdots\omega^{k(m)})\wedge x|\big|<+\infty.
        \end{align*}
        From these inequalities, we have $\xi\in\unifescset_x$. Since $\xi$ is taken arbitrarily from $K$, we see that $K\subseteq \unifescset_x$, which implies that $\mu^p(\unifescset_x)=1$.

        For each integer $k\geq 1$ and $\omega^{(0)}\cdots\omega^{(k-1)}\in\shiftsp^*$ satisfying $\omega^{(j)}\in\mathcal{A}(x_j,a_{j+1})$ for all $j\in\set{0,\cdots,k-1}$, we deduce from \eqref{eqn:prop:lobound:proof2} that
        \begin{align*}
            &\quad\sum_{\omega\in\mathcal{A}(x_k,a_{k+1})}\exp(-pS_{\omega^{(0)}\cdots \omega^{(k-1)}\omega}u)\\
            &\geq \exp(-pS_{\omega^{(0)}\cdots \omega^{(k-1)}}u-pV_u)\sum_{\omega\in\mathcal{A}(x_k,a_{k+1})}\exp(-pS_{\omega}u)\\
            &\geq \exp(-pS_{\omega^{(0)}\cdots \omega^{(k-1)}}u).
        \end{align*}
        Hence, combining this with how we constructed $\mu^p$, we have for all integers $k\geq 1$,
        \begin{align*}
            &\quad \max_{\omega^{(0)}\in\mathcal{A}(x_0,a_1),\cdots,\omega^{(k)}\in\mathcal{A}(x_k,a_{k+1})} \frac{\mu^p([\omega^{(0)}\cdots\omega^{(k)}])}{\exp(-pS_{\omega^{(0)}\cdots\omega^{(k)}}u)}\\
            &=\max_{\omega^{(0)}\in\mathcal{A}(x_0,a_1),\cdots,\omega^{(k-1)}\in\mathcal{A}(x_{k-1},a_k)}\frac{\mu^p([\omega^{(0)}\cdots\omega^{(k-1)}])}{\sum_{\omega\in\mathcal{A}(x_k,a_{k+1})}\exp(-pS_{\omega^{(0)}\cdots\omega^{(k-1)}\omega}u)}\\
            &\leq \max_{\omega^{(0)}\in\mathcal{A}(x_0,a_1),\cdots,\omega^{(k-1)}\in\mathcal{A}(x_{k-1},a_k)}\frac{\mu^p([\omega^{(0)}\cdots\omega^{(k-1)}])}{\exp(-pS_{\omega^{(0)}\cdots \omega^{(k-1)}}u)},
        \end{align*}
        where the first equality follows from \eqref{eqn:prop:lobound:proof3}. As a consequence, we have
        \begin{equation*}
            \max_{\omega^{(0)}\in\mathcal{A}(x_0,a_1),\cdots,\omega^{(k)}\in\mathcal{A}(x_k,a_{k+1})} \frac{\mu^p([\omega^{(0)}\cdots\omega^{(k)}])}{\exp(-pS_{\omega^{(0)}\cdots\omega^{(k)}}u)}\leq\max_{\omega^{(0)}\in\mathcal{A}(x_0,a_1)} \frac{\mu^p([\omega^{(0)}])}{\exp(-pS_{\omega^{(0)}}u)},
        \end{equation*}
        for every integer $k\geq 0$. Since we have seen at the end of the previous paragraph that $\mu^p([\omega])=1$, by the mass distribution principle, we thus have $\dim_u(\unifescset_x)\geq p$. Since $p$ is an arbitrary positive number less than $\poincareexp(\identity_\fgfuchs)$, the proof is complete.
    \end{proof}
    
\subsection{Proof of Theorem \ref{mainthm:dimspec:symmetric}} \label{subsec:dimspec:symmetric}
    We utilize the results we obtained in Section \ref{sec:rpoincareseries} to show Theorem \ref{mainthm:dimspec:symmetric} as follows.
    \begin{proof} [Proof of Theorem \ref{mainthm:dimspec:symmetric}]
        Fix an arbitrary $x\in\partial\fgfuchs$ and $p>\poincareexp(\identity_\fgfuchs)> 0$. For each $r\geq 0$, we define
        \begin{equation*}
            \mathcal{S}(x,r):=\Set{\omega\in\shiftsp^*| \left|\chi(\omega)\right|-|\chi(\omega)\wedge x|\leq r}.
        \end{equation*}
        Then, for any $r\geq 0$, we employ the almost multiplicativity of the restricted Poincar\'e series in Proposition \ref{prop:poincareseries:supadd} to deduce
        \begin{align*}
            \sum_{\omega\in\mathcal{S}(x,r)}\exp(-pS_\omega u)&\leq\sum_{m=0}^\infty \sum_{g\in\fgfuchs:|g|\leq r}\sum_{\omega\in\chi^{-1}(x_0\cdots x_{m-1}g)}\exp(-pS_\omega u)\\
            &=\sum_{m=0}^\infty \sum_{g\in\fgfuchs:|g|\leq r}Z(p|x_0\cdots x_{m-1}g)\\
            &\leq \sum_{m=0}^\infty \sum_{g\in\fgfuchs:|g|\leq r}C_+(p)\frac{Z(p|x_0\cdots x_{m-1})}{Z(p|g^{-1})}\\
            &=C_+(p)\sum_{g\in\fgfuchs:|g|\leq r}\frac{1}{Z(p|g^{-1})} \sum_{m=0}^\infty Z(p|x_0\cdots x_{m-1}),
        \end{align*}
        where we set $x_0\cdots x_{-1}$ to be $\identity_\fgfuchs$ and take $C_+(p)$ as in Proposition \ref{prop:poincareseries:supadd}. With the symmetries we assumed, we get from Corollary \ref{cor:Zgeodseries:conv:symmetric} that $\sum_{m=1}^\infty Z(p|x_0\cdots x_{m-1})<+\infty$. Therefore, for all $r\geq 0$, we have
        \begin{equation*}
            \sum_{\omega\in\mathcal{S}(x,r)}\exp(-pS_\omega u)<+\infty.
        \end{equation*}

        Take an arbitrary $\xi\in\accumset_x$. For every integer $k\geq 1$, we take $m_k$ as the least positive integer such that $x_0\cdots x_{k-1}$ is a prefix of $\chi(\xi_0\cdots \xi_{m_k-1})$. Then, for each positive integer $k$, since $x$ is not a prefix of $\chi(\xi_0\cdots\xi_{m_k-2})$, we have
        \begin{align*}
            |\chi(\xi_0\cdots\xi_{m_k-1})|-|\chi(\xi_0\cdots\xi_{m_k-1})\wedge x|&\leq |\chi(\xi_0\cdots\xi_{m_k-1})|-|\chi(\xi_0\cdots\xi_{m_k-2})|\\
            &\leq |\chi(\xi_{m_k-1})|\leq\lambda_1(\chi):=\max_{a\in\alphabet}|\chi(a)|.
        \end{align*}
        Equivalently, we can say $\xi_0\cdots\xi_{m_k-1}\in\mathcal{S}(x,\lambda_1(\chi))$ for all integer $k\geq 1$. Therefore, for all $L>0$, $\set{[\omega]|\omega\in\mathcal{S}(x,\lambda_1(\chi)),\,|\omega|\geq L}$ is a covering of $\accumset_x$. Combining this with the convergence of $\sum_{\omega\in\mathcal{S}(x,\lambda_1(\chi))}\exp(-pS_\omega u)$ we showed in the previous paragraph, we can see that $\dim_u(\accumset_x)\leq p$. Since $p$ is an arbitrary real number greater than $\poincareexp(\identity_\fgfuchs)$, we have $\dim_u(\accumset_x)\leq \poincareexp(\identity_\fgfuchs)$. Combining this with the lower bound we obtained in Theorem \ref{mainthm:dimspec:general}, we conclude that \eqref{eqn:dimspec:eq} holds.
    \end{proof}

\section{Application to geodesic flows on free group covers of Schottky surfaces} \label{sec:appl}
In this section, we firstly give the definition as well as some basic facts about Schottky groups, and then prove Theorem \ref{mainthm:geodflow} stated in the introduction.
\subsection{Preliminaries on Schottky groups} \label{subsec:schottky:def}
    Our definition of Schottky groups is aligned with the definition in \cite[Section 15.1]{borthwick}. Let $\hyperbsp$ be the open unit disk in $\bbC$, equipped with the hyperbolic metric $d_h$. Let $\alphabet$ be a finite set whose cardinality is an even number no less than $6$, and $\bar{\cdot}:\alphabet\rightarrow\alphabet$ be an involution with no fixed points. Associate to each $a\in\alphabet$ a closed subset $\mathcal{H}_a$ of $\hyperbsp$ whose boundary in $\hyperbsp$, denoted by $\mathcal{S}_a$, is a complete geodesic in $\hyperbsp$. Further associate to each $a\in\alphabet$ an orientation-preserving isometry mapping $\hyperbsp$ onto $\hyperbsp$ and mapping $\mathcal{H}_{\bar{a}}$ onto $(\hyperbsp\setminus\mathcal{H}_a)\cup\mathcal{S}_a$. Slightly abusing the notation, we shall denote this isometry by $a$. In addition, the isometries are taken so that $a$ and $\bar{a}$ are inverse to each other for all $a\in\alphabet$. Then $\alphabet$, which is now considered as a subset of the group $\Isom_+(\hyperbsp)$ of orientation-preserving isometries on $\hyperbsp$, generates a Fuchsian group $G$. This group $G$ is called a Schottky group.

    The following assertions in this and the next two paragraphs are all well-known facts about Schottky groups; see e.g. Section 15.1 and Section 15.2 of \cite{borthwick}. The closed polygon
    \begin{equation*}
        \fundpoly :=\left(\hyperbsp\setminus\bigcup_{a\in\alphabet}\mathcal{H}_a\right)\cup\bigcup_{a\in\alphabet}\mathcal{S}_a
    \end{equation*}
    is a fundamental domain of $G$. The boundary of $\fundpoly$ in $\hyperbsp$ is precisely the union $\bigcup_{a\in\alphabet}\mathcal{S}_a$ of complete geodesics. These geodesics are paired by the elements of $\alphabet$; for every $a\in\alphabet$, the image $a\mathcal{S}_{\bar{a}}$ of the geodesic $\mathcal{S}_{\bar{a}}$ under the isometry $a$ is exactly the geodesic $\mathcal{S}_a$. Hence, the elements of $\alphabet$ are often called side-pairing transformations.

    The Schottky group $G$ is a free group, and $\alphabet$ is a balanced free generating set of $G$. Hence, every element of $G$ can be uniquely written as a reduced subword over $\alphabet$; we represent the identity $\identity_G$ by the empty word. As in Subsection \ref{subsec:freegroups}, we will not distinguish the group elements of $G$ from their corresponding reduced words over $\alphabet$.

    Aside from the one-to-one correspondence between group elements of $G$ and reduced words over $\alphabet$, there is also a natural one-to-one correspondence between the limit points of $G$ and the reduced sequences over $\alphabet$. A limit point of a Fuchsian group $H$ is an accumulation point of the orbit $H0=\set{h0|h\in H}$, and the limit set $\limitset(H)$ of $H$ is the set of all the limit points of $H$. As the orbit $H0$ cannot accumulate in $\hyperbsp$, $\limitset(G)$ is a subset of the unit circle $\partial\hyperbsp$. For a Schottky group $G$, we can associate a limit point $\pi(\xi)$ of $G$ to each reduced sequence $\xi$ over $\alphabet$. For each reduced $\omega=\omega_0\cdots\omega_{|\omega|-1}\in G\setminus\set{\identity_G}$, we define
    \begin{equation*}
        \mathcal{H}_\omega:=\omega\mathcal{H}_{\overline{\omega_{|\omega|-1}}}.
    \end{equation*}
    Then for each reduced sequence $\xi$ over $\alphabet$, it is known that $(\overline{\mathcal{H}_{\xi_0\cdots\xi_{m-1}}})_{m\geq 1}$ is descending, where $\overline{\mathcal{H}_{\xi_0\cdots\xi_{m-1}}}$ is the closure of $\mathcal{H}_{\xi_0\cdots\xi_{m-1}}$ taken in the closed disk $\hyperbsp\cup\partial\hyperbsp$. Moreover, $\bigcap_{m=1}^\infty\overline{\mathcal{H}_{\xi_0\cdots\xi_{m-1}}}$ is a singleton, whose only element is set to be $\pi(\xi)$. Let $\shiftsp_G$ be the set of all reduced sequences over $\alphabet$. Then, the coding map $\pi:\shiftsp_G\rightarrow\limitset(G)$ is a homeomorphism, where the topology of $\shiftsp_G$ is generated by all the cylinder sets. Indeed, we have $\pi([\omega])=\overline{\mathcal{H}_{\omega}}\cap \limitset(G)$ for all $\omega\in G\setminus\set{\identity_G}$. The natural left shift $\leftshift_G:\shiftsp_G\rightarrow\shiftsp_G$ on $\shiftsp_G$ can also be passed to $\limitset(G)$. Following the idea of Bowen and Series in \cite{BowSer79}, one may define the Bowen-Series map $f:\bigcup_{a\in\alphabet}I_a\rightarrow\partial\hyperbsp$ by setting $f\big|_{I_a}:=a^{-1}$ for all $a\in\alphabet$, where we define $I_a:=\overline{\mathcal{H}_a}\cap\partial\hyperbsp$ for all $a\in\alphabet$. Then, the range of $f\big|_{\limitset(G)}$ is $\limitset(G)$, and $f\big|_{\limitset(G)}:\limitset(G)\rightarrow\limitset(G)$ is conjugate to $\leftshift_G:\shiftsp_G\rightarrow\shiftsp_G$ via $\pi$, i.e. $f\big|_{\limitset(G)}=\pi\circ\leftshift_G\circ\pi^{-1}$.

    Define $u_0:\shiftsp_G\rightarrow\bbR$ by letting
    \begin{equation*}
        u_0(\xi):=\log|f'(\pi(\xi))|,\,\forall\xi\in\shiftsp_G.
    \end{equation*}
    The H\"older continuity of $u_0$ can be seen from \cite[Lemma 3.3]{KesStr04}. The Bowen-Series map is known to be eventually expanding, i.e. $\inf_{\xi\in\shiftsp_G}|(f^m)'(\pi(\xi))|>1$ for sufficiently large $m$, hence implying that the $m$-th Birkhoff average
    \begin{equation*}
        u:=\frac{1}{m}S_mu_0
    \end{equation*}
    is positive for sufficiently large $m$. In addition, $u$ is a linear combination of H\"older continuous functions, so $u$ is also H\"older continuous. Moreover, $u$ and $u_0$ are cohomologous to each other, by which we mean that there is some continuous function $v:\shiftsp\rightarrow\bbR$ such that
    \begin{equation} \label{eqn:cohom}
        u=u_0+v\circ \leftshift_G-v.
    \end{equation}
    When $m=1$, we have $u=u_0$, so we can take $v=0$. For $m\geq 2$, we take
    \begin{equation*}
        v=\frac{1}{m}\sum_{k=0}^{m-2}(m-1-k)u_0\circ\leftshift_G^k.
    \end{equation*}
    Then direct calculations give \eqref{eqn:cohom}. An immediate consequence of \eqref{eqn:cohom} is
    \begin{equation*}
        \sup_{m\in\bbN} \sup_{\xi\in\limitset(G)} |S_m u(\xi)-S_mu_0(\xi)|\leq 2\|v\|_\infty<+\infty.
    \end{equation*}
    Combining this with the definition of $u_0$, it is not hard to derive that
    \begin{equation} \label{eqn:hausdim:udim}
        \hausdim(\pi(E))=\dim_u(E),
    \end{equation}
    for all $E\subseteq\shiftsp_G$, where the Hausdorff dimension on the left-hand side of the equation is with respect to the spherical metric on $\partial\hyperbsp$. The Birkhoff sums of $u$ are also related to the hyperbolic metric $d_h$ on $\hyperbsp$. Since our definition of Schottky groups does not allow the existence of parabolic elements, it is known \cite[Subsection 3.1.2]{KesStr04} that
    \begin{equation} \label{eqn:birksum:hyperbmetric}
        \Delta:=\sup_{\xi\in\shiftsp} \sup_{m\in\bbN} |d_h(0,\xi_0\cdots\xi_{m-1}0)-S_mu(\xi)|<+\infty.
    \end{equation}
    
\subsection{Proof of Theorem \ref{mainthm:geodflow}} \label{subsec:appl:proof}
    We show how to prove Theorem \ref{mainthm:geodflow} using Theorem \ref{mainthm:dimspec:symmetric}.
    \begin{proof} [Proof of Theorem \ref{mainthm:geodflow}]
        As we remarked in the introduction, we only need to prove the first three equalities, i.e.
        \begin{equation*}
            \hausdim(\accumset_x(\nsubg))= \hausdim(\escset_x(\nsubg))=\hausdim(\unifescset_x(\nsubg))=\poincareexp(\nsubg),
        \end{equation*}
        for an arbitrary $x\in\partial\fgfuchs$. One can easily check that the group homomorphism $\chi:G\rightarrow\fgfuchs$ is a homomorphic projection, where the SFT extended by $\chi$ is $\leftshift_G:\shiftsp_G\rightarrow\shiftsp_G$. Hence it is straightforward that $\accumset_x(\nsubg)=\pi(\accumset_x)$, $\escset_x(\nsubg)=\pi(\escset_x)$, $\unifescset_x(\nsubg)=\pi(\unifescset_x)$ and $\poincareexp(\nsubg)=\poincareexp(\identity_\fgfuchs)$. Combining this with \eqref{eqn:hausdim:udim}, we see that what we need prove is
        \begin{equation*}
            \dim_u(\accumset_x)= \hausdim(\escset_x)=\hausdim(\unifescset_x)=\poincareexp(\identity_\fgfuchs).
        \end{equation*}
        Therefore, we can directly apply Theorem \ref{mainthm:dimspec:symmetric} to deduce the equalities above, after checking that all the conditions in Theorem \ref{mainthm:dimspec:symmetric} are satisfied. We need to check the transitivity of $\chi^{-1}(\identity_\fgfuchs)$, the symmetry of the $\fgfuchs$-extension of $\leftshift_G:\shiftsp_G\rightarrow\shiftsp_G$ by $G$, and the symmetry of $u$.

        Since the rank of the free group $G$ is greater than the rank of the free group $\fgfuchs$, the surjective group homomorphism $\chi$ cannot be injective, implying that the kernel $\chi^{-1}(\identity_\fgfuchs)$ of $\chi$ has elements other than the identity $\identity_G$ of $G$. Take an arbitrary $\omega=\omega_0\cdots\omega_{|\omega|-1}\in \chi^{-1}(\identity_\fgfuchs)\subseteq G\setminus\set{\identity_G}$. Then, we can show that $\mathcal{I}:=\set{a\omega a^{-1}|a\in G_0\setminus\set{\omega_0^{-1},\omega_{\left|\omega\right|-1}}}$ is transitive. Indeed, for any two $a',a''\in\alphabet$, there is some $a\in G_0\setminus\set{\omega_0^{-1},\omega_{\left|\omega\right|-1},(a')^{-1},a''}$ because $\# G_0\geq 6$, and we clearly have $a'a\omega a^{-1}a''$ is reduced, which shows the transitivity of $\mathcal{I}$. In addition, note that for any $a\in G_0\setminus\set{\omega_0^{-1},\omega_{\left|\omega\right|-1}}$, we have $\chi(a\omega a^{-1})=\chi(a)\chi(\omega)\chi(a)^{-1}=\chi(a)\identity_G\chi(a)^{-1}=\identity_G$. Therefore, $\mathcal{I}\subseteq\chi^{-1}(\identity_\fgfuchs)$. Hence we can deduce the transitivity of $\chi^{-1}(\identity_\fgfuchs)$.
        
        Define $\omega^\dagger=\omega^{-1}$ for all $\omega\in G$. It is clear that the $\fgfuchs$-extension of the SFT $f:\limitset(G)\rightarrow\limitset(G)$ is symmetric with respect to $\cdot^\dagger$. Thus, it only remains to check the symmetry of $u$ with respect to $\cdot^\dagger$. Using \eqref{eqn:birksum:hyperbmetric}, we have
        \begin{align*}
            \sup_{\omega\in G}|S_{\omega^\dagger} u-S_\omega u|&=\sup_{\omega\in G}|S_{\omega^{-1}} u-S_\omega u|\\
            &\leq2\Delta+\sup_{\omega\in G} |d_h(0,\omega 0)-d_h(0,\omega^{-1}0)|=2\Delta<+\infty,
        \end{align*}
        where the last equality holds because $\omega\in G$ is an isometry. The proof is thus complete.
    \end{proof}

\appendix
\section{Proof of Lemma \ref{lma:disjcylinders}} \label{app:lemma:proof}
    We prove Lemma \ref{lma:disjcylinders} using an argument similar to the argument in the proof of \cite[Lemma 3.1]{liu}.
    \begin{proof} [Proof of Lemma \ref{lma:disjcylinders}]
        Let $\mathcal{I}_0$ be a finite transitive subset of $\chi^{-1}(\identity_\fgfuchs)$, and let $L_0$ be the maximum of the lengths of the words in $\mathcal{I}_0$. For any $a,b$ in the alphabet $\alphabet$ and any $m\in\bbN$, we define
        \begin{equation*}
            \mathcal{C}^m(a,b):=\Set{\rho\in\chi^{-1}(\identity_\fgfuchs)| m\leq |\rho|\leq m+2L_0,\,a\rho b\in\shiftsp^*}.
        \end{equation*}
        For each $m\in\bbN$, let $\shiftsp^m:=\set{\omega\in\shiftsp^*| |\omega|=m}$, and we define a map $\theta^m_{a,b}:\chi^{-1}(\identity_\fgfuchs)\cap\shiftsp^m\rightarrow\mathcal{C}^m(a,b)$ as follows. For each $\omega\in\chi^{-1}(\identity_\fgfuchs)\cap\shiftsp^m$, we find $\rho,\rho'\in\mathcal{I}_0$ such that $a\rho\omega\rho'b\in\shiftsp^*$, and set $\theta^m_{a,b}(\omega)=\rho\omega\rho'\in\mathcal{C}^m(a,b)$.
        
        Observe that for any $\bar{\omega}\in\mathcal{C}^m(a,b)$, if there is some $\omega$ such that $\theta^m_{a,b}(\omega)=\bar{\omega}$, then $\omega$ must be a subword which begins with one of the first $L_0+1$ symbols in $\bar{\omega}$. This indicates that $\#(\theta^m_{a,b})^{-1}(\bar{\omega})\leq L_0+1$ for all $\bar{\omega}\in\mathcal{C}^m(a,b)$, which further gives
        \begin{equation} \label{eqn:lma:disjcylinders:proof1}
            \#\mathcal{C}^m(a,b)\geq \frac{\#(\chi^{-1}(\identity_\fgfuchs)\cap\shiftsp^m)}{L_0+1}.
        \end{equation}
        As we have seen in Proposition \ref{prop:delta:+} that $\poincareexp(\identity_\fgfuchs)>0$, we take an arbitrary $p\in(0,\poincareexp(\identity_\fgfuchs))$, and we have
        \begin{equation*}
            \sum_{m=1}^\infty \sum_{\omega\in\chi^{-1}(\identity_\fgfuchs)\cap\shiftsp^m}\exp(-pS_\omega u)=\sum_{\omega\in\chi^{-1}(\identity_\fgfuchs)}\exp(-pS_\omega u)=+\infty.
        \end{equation*}
        It follows that
        \begin{equation*}
            \limsup_{m\rightarrow+\infty}\frac{1}{m}\log\sum_{\omega\in\chi^{-1}(\identity_\fgfuchs)\cap\shiftsp^m}\exp(-pS_\omega u)\geq 0.
        \end{equation*}
        Since $\log\sum_{\omega\in\chi^{-1}(\identity_\fgfuchs)\cap\shiftsp^m}\exp(-pS_\omega u)\leq -mp\min u+\log\#(\chi^{-1}(\identity_\fgfuchs)\cap\shiftsp^m)$, we have
        \begin{equation*}
            \limsup_{m\rightarrow+\infty}\frac{1}{m}\log\#(\chi^{-1}(\identity_\fgfuchs)\cap\shiftsp^m)\geq p\min u>0.
        \end{equation*}
        Combining this with \eqref{eqn:lma:disjcylinders:proof1}, we see that by taking a sufficiently large $m$, which does not depend on $a,b\in\alphabet$, we can let
        \begin{equation*}
            \#\mathcal{C}^m(a,b) > N:=(\#\alphabet^2-1)(2L_0+1)+(\#\alphabet+1)(\#\alphabet^{2L_0+1}-1)
        \end{equation*}
        for all $a,b\in\alphabet$.

        Label the elements of $\alphabet$ such that $\alphabet=\set{a_1,\cdots,a_{\#A}}$, and endow $\alphabet\times \alphabet$ with the lexicographic ordering. We choose $\rho(a,b)\in\mathcal{C}^m(a,b)$ for each $(a,b)\in\alphabet\times\alphabet$ following this ordering. The way we choose $\rho(a,b)$ goes as follows. At the first step, we pick an arbitrary $\rho(a_1,a_1)$ from $\mathcal{C}^m(a_1,a_1)$. At the $k$-th step with $k\in\set{2,\cdots,\#\alphabet^2}$, we choose $\rho(a_i,a_j)\in\mathcal{C}^m(a_i,a_j)$ for some $(a_i,a_j)\in\alphabet\times\alphabet$ satisfying that for every $\rho(a_{i'},a_{j'})$ which has been defined, $\rho(a_i,a_j)$ is not a prefix of $\rho(a_{i'},a_{j'})$ and $\rho(a_{i'},a_{j'})$ is not a prefix of $\rho(a_i,a_j)$ either. Such a choice of $\rho(a_i,a_j)$ is possible. To see this, note that for any $\rho\in\bigcup_{a,b\in\alphabet}\mathcal{C}^m(a,b)$, whose length is clearly between $m$ and $m+2L_0$, we have
        \begin{align*}
            \#\Set{\rho'\in\bigcup_{k=m}^{m+2L_0}\shiftsp^k|\rho'\text{ is a prefix of }\rho}&\leq 2L_0+1;\\
            \#\Set{\rho'\in\bigcup_{k=m}^{m+2L_0}\shiftsp^k|\rho\text{ is a prefix of }\rho'}&\leq\sum_{l=0}^{2L_0}\#\alphabet^l=\frac{\#\alphabet^{2L_0+1}-1}{\#\alphabet-1}.
        \end{align*}
        When we choose $\rho(a_i,a_j)$ at step $k$, the number of words which have been chosen in the previous steps is $k-1\leq\#\alphabet^2-1$. Therefore, the total number of words we need to avoid when picking $\rho(a_i,a_j)$ is less than or equal to $N$. As we picked $m$ large enough so that $\#\mathcal{C}^m(a_i,b_j)> N$, a word $\rho(a_i,a_j)\in\mathcal{C}^m(a_i,a_j)$ satisfying our requirements must exist.
        
        After choosing $\rho(a,b)$ for all $a,b\in\alphabet$ following the way we described in the previous paragraph, we set $\mathcal{I}:=\set{\rho(a,b)|a,b\in\alphabet}$. Evidently, $\mathcal{I}$ is finite and transitive. Moreover, the requirement we imposed on $\rho(a_i,a_j)$ previously guarantees that for any two distinct $(a,b),(a',b')\in\alphabet\times\alphabet$, $\rho(a,b)$ is not a prefix of $\rho(a',b')$ and $\rho(a',b')$ is not a prefix of $\rho(a,b)$, which implies that $[\rho(a,b)]\cap[\rho(a',b')]\neq\varnothing$. This completes our proof.
    \end{proof}


\end{document}